\newcommand{\email}[1]{\href{mailto:#1}{#1}}
\newtheorem{theorem}{Theorem}
\newtheorem{prop}[theorem]{Proposition}
\theoremstyle{remark}
\newtheorem{remark}[theorem]{Remark}
\theoremstyle{definition}
\newtheorem{assumption}{Assumption}
\newtheorem{definition}[theorem]{Definition}
\newcommand{\llbrace}{\lbrace\hspace{-0.15cm}\lbrace}
\newcommand{\rrbrace}{\rbrace\hspace{-0.15cm}\rbrace}
\newcommand{\norm}[1]{\| #1\|}
\newcommand{\trinorm}[1]{{\vert\kern-0.25ex\vert\kern-0.25ex\vert #1 \vert\kern-0.25ex\vert\kern-0.25ex\vert}}
\title{On mathematical and numerical modelling of multiphysics wave propagation with polytopal Discontinuous Galerkin methods}
\author[1]{Paola F. Antonietti \footnote{\email{paola.antonietti@polimi.it}}}
\author[1]{Michele Botti \footnote{\email{michele.botti@polimi.it}}}	
\author[1]{Ilario Mazzieri \footnote{\email{ilario.mazzieri@polimi.it}}}
\affil[1]{MOX-Laboratory for Modeling and Scientific Computing, Department of Mathematics, Politecnico di Milano, Italy}
\begin{document}
\maketitle

\begin{abstract}
In this work we review discontinuous Galerkin finite element methods on polytopal  grids (PolydG) for the numerical simulation of multiphysics wave propagation phenomena in heterogeneous media. 
In particular, we address wave phenomena in elastic, poro-elastic, and poro-elasto-acoustic materials.  
Wave propagation is modeled by using either the elastodynamics equation in the elastic domain, the  acoustics equations in the acoustic domain and the low-frequency Biot's equations in the poro-elastic one. 
The coupling between different models is realized by means of (physically consistent) transmission conditions, weakly imposed at the interface between the subdomains.
For all models configuration, we introduce and analyse the PolydG semi-discrete formulation, which is then coupled with suitable time marching schemes. For the semi-discrete problem, we present the stability analysis and derive a-priori error estimates in a suitable energy norm. A wide set of verification tests with manufactured solutions are presented in order to validate the error analysis. Examples of physical interest are also shown to demonstrate the capability of the proposed methods.
\medskip\\
\textbf{Keywords:} Poroelasticity, Acoustics, Discontinuous Galerkin method, Polygonal and polyhedral meshes, Stability and convergence analysis
\smallskip\\
\textbf{MSC Classification:} 35L05, 65M12, 65M60, 74F10
\end{abstract}


\section{Introduction}

Multiphysics wave propagation in heterogeneous media is a very attractive research topic and, in recent decades, it has registered considerable interest in the mathematical, geophysical and engineering communities. 
Mathematical models for wave propagation phenomena range from the linear transport equation, to the non-linear system of Navier-Stokes equations. They appear in many different scientific disciplines, such as acoustic engineering \cite{TKWF2010}, vibroacoustics \cite{krishnan}, aeronautical engineering,  \cite{castagnede1998ultrasonic}, biomedical engineering \cite{HAIRE1999291}, and computational geosciences; see \cite{carcione2014book} for a comprehensive review. 

Thanks to the ongoing development of increasingly advanced high-performance computing facilities, the use of digital twins models for solving wave propagation problems has given a notable impulse towards a deeper understanding of these phenomena. 
Numerical methods designed for wave simulations must simultaneously account for the following three distinguishing features: \textit{accuracy}, \textit{geometric flexibility} and \textit{scalability}.
\textit{Accuracy} is essential to correctly reproduce the physical phenomenon, and allows to minimize numerical dispersion and dissipation errors that would deteriorate the quality of the solution. \textit{Geometric flexibility} is required since the computational domain usually features complicated geometrical shapes as well as sharp media contrasts. \textit{Scalability} is demanded to solve on parallel machines real computational models featuring several hundred of millions or even billions of unknowns. 

In this work we consider wave propagation problems arising from geophysics and we discuss and analyze several models, with increasing complexity, employed in this scientific area. We first present models of elastodynamics, then of poro-elasticity, and finally coupled poro-elasto-acoustics models.  

Elastodynamics and viscoelastodynamics models are typically used for the study of seismic waves that propagate across the globe and are generated by earthquakes, volcanic activity, or artificial explosions. As far as the elastodynamics equations are concerned, the most used numerical methods are finite differences \cite{Chaljub2015,Pitarka2021,McCallen2021}, finite elements \cite{Bielak2005}, finite volumes \cite{dumbser2007arbitrary,pelties2012three,duru2020stable,breuer2017edge}, and spectral elements in either their conforming \cite{komatitsch2004simulations,stupazzini2009near,Galvez2014} or discontinuous setting \cite{DeBasabe2008,Antonietti2018,ferroni2016dispersion}.

Poro-elastodynamics models are used to describe the propagation of pressure and elastic waves through a porous medium. Pressure waves propagate through the saturating fluid inside pores, while elastic ones through the porous skeleton. 
In the pioneering work by Biot \cite{biot1} general equations of waves propagation in poro-elastic materials were introduced. More recently, in \cite{smeulders1992wave} it is proposed a model of seismic waves in saturated soils, distinguishing between in-phase (\textit{fast}) movements between solid and fluid and out-phase (\textit{slow}) ones. Poro-elasto-acoustic problem model acoustic/sound waves impacting a porous material and consequently propagating through it. The coupling between acoustic and poro-elastic domains, realized by means of physically consistent transmission conditions at the interface, is discussed in \cite{gurevich1999interface} and \cite{chiavassa_lombard_2013}.

There is a wide strand of literature concerning the numerical discretization of poroelastic or poro-elasto-acoustic models. Here, we recall, e.g., the Lagrange Multipliers  method \cite{rockafellar1993lagrange,zunino,Flemisch2006}, the Finite Element method \cite{BERMUDEZ200317,FKTW2010}  the Spectral and Pseudo-Spectral Element method \cite{Tromp_Morency_2008,Sidler2010}, the ADER scheme \cite{delapuente2008,chiavassa_lombard_2013}, the finite difference method \cite{LOMBARD200490}, and references therein.

The aim of this work is to introduce and analyze a discontinuous Galerkin method on polygonal/polyhedral (polytopal, for short) grids (PolydG) for the numerical discretization of multiphysics waves propagation through heterogeneous materials. The geometric flexibility and arbitrary-order accuracy featured by the proposed scheme are crucial within this context as they ensure the possibility of handling complex geometries and an intrinsic high-level of precision that are necessary to correctly represent the solutions.

For early results in the field of dG methods we refer the reader to \cite{AnBrMa2009,BaBoCoDiPiTe2012,AntoniettiGianiHouston_2013,CangianiGeorgoulisHouston_2014,CangianiDongGeorgoulisHouston_2016,CongreveHouston2019,cangiani2020hpversion}
for second-order elliptic problems, to \cite{CangianiDongGeorgoulis_2017} for 
parabolic differential equations, to \cite{AntoniettiFacciolaRussoVerani_2019} for flows in fractured porous media, and to \cite{AntoniettiVeraniVergaraZonca_2019} for fluid structure interaction problems.
In the framework of dG methods for hyperbolic problems, we mention \cite{riviere2003discontinuous,GrScSc06} for scalar wave equations on simplicial grids and the more recent PolydG discretizations designed in \cite{AntoniettiMazzieri_2018} for elastodynamics problems, in \cite{AntoniettiMazzieriMuhrNikolicWohlmuth_2020} for non-linear sound waves, in \cite{bonaldi,AntoniettiBonaldiMazzieri_2019b} for coupled elasto-acoustic problems, and in \cite{Antonietti.ea:21} for poro-elasto-acoustic wave propagation. 

The results of the present work review and extend the analysis conducted in \cite{AntoniettiMazzieri_2018} and \cite{Antonietti.ea:21}. In particular, in Section \ref{sec:poro-el} we provide a novel stability estimate for the poro-elastic case requiring minimal regularity on problem data and showing explicitly the dependence on the model coefficients, final simulation time, and initial conditions. Additionally, in Section \ref{sec:poro-el-ac} we generalize the PolydG semi-discrete formulation derived in \cite[Section 3]{Antonietti.ea:21} to the heterogeneous case, namely we allow all the model coefficients to be discontinuous over the computational domain.

The remaining part of the paper is structured as follows: in Section \ref{sec:model_problem} we review the differential models for wave propagation in heterogeneous Earth's media while in Section \ref{sec:DGnotation} we define the discrete setting used in the paper.  
The elastodynamic model and its numerical discretization through a PolydG method is recalled in Section \ref{sec:linear_elasticity}, while Sections \ref{sec:poro-el} and \ref{sec:poro-el-ac} discuss the numerical analysis of a PolydG method for wave propagation problems in poro-elastic and coupled poro-elastic-acoustic media, respectively. 
Section \ref{sec:linear_elasticity}, \ref{sec:poro-el}, and \ref{sec:poro-el-ac} contain at the end suitable verification test cases to validate the theoretical error bounds.
Different numerical tests of physical interest are introduced and discussed in Section \ref{sec:NumResults}. Finally, in Section \ref{sec:conclusions} we draw some conclusions and discuss some perspective about future work.        

\subsection*{Notation}

In the following, for an open, bounded domain $D\subset\mathbb{R}^d, d=2, 3$, the notation $\bm{L}^2(D)$ is used in place of $[L^2(D)]^d$, with $d\in\{2,3\}$. The scalar product in $L^2(D)$ is denoted by $(\cdot,\cdot)_D$, with associated norm $\norm{\cdot}_D$.  Similarly, $\bm{H}^\ell(D)$ is defined as $[H^\ell(D)]^d$, with $\ell\geq 0$, equipped with the norm $\norm{\cdot}_{\ell,D}$, assuming conventionally that $\bm{H}^0(D)\equiv\bm{L}^2(D)$. In addition, we will use $\bm{H}(\textrm{div},D)$ to denote the space of $\bm{L}^2(D)$ functions with  square integrable divergence. In order to take into account essential boundary conditions, we also introduce the subspaces 
\begin{align}
H^1_0(D) &= \{ \psi\in H^1(D) \, \mid \, \psi_{\mid \Gamma_{D}} = 0 \}, \\
\bm{H}^1_0(D) &= \{ \bm{v}\in \bm{H}^1(D) \,\mid \, \bm{v}_{\mid \Gamma_{D}} = \bm{0}\}, \\
\bm{H}_0(\textrm{div},D) &= \{ \bm{z}\in \bm{H}(\textrm{div},D) \,\mid \, (\bm{z}\cdot\bm{n}_p)_{\mid\Gamma_{D}} = 0\},
\end{align}
with $\Gamma_D\subset\partial D$ having strictly positive Hausdorff measure.
Given $k\in\mathbb{N}$ and a Hilbert space $\mathbb{H}$, the usual notation $C^k([0,T];\mathbb{H})$ is adopted for the space of $\mathbb{H}$-valued functions, $k$-times continuously differentiable in $[0,T]$.  Finally, the notation $x\lesssim y$ stands for $x\leq C y$, with $C>0$ independent of the discretization parameters, but possibly dependent on the physical coefficients and the final time $T$.


\section{Modelling seismic waves}\label{sec:model_problem}

A seismic event is the result of a sudden release of energy due to the rupture of a more fragile part of the Earth's crust called the fault. The deformation energy, accumulated for tens and sometimes hundreds of years along the fault, is transformed into kinetic energy that radiates, in the form of waves, in all directions through the layers of the Earth.
Seismic waves are therefore energy waves that produce an oscillatory movement of the ground during their passage. Seismic waves are subdivided into two main categories: volume waves and surface waves. The former can be decomposed into compression waves (P) and shear waves (S). The (faster) P waves are transmitted both in liquids and in solids, while the (slower) S waves travel only in solid media. P waves induce a ground motion aligned with the wave field direction while S waves induce ground a motion in a plane perpendicular to the wave propagation field.

More and more frequently mathematical models are used for the study and analysis of ground motion. The solution of these models through appropriate numerical methods can provide important information for the evaluation of the seismic hazard of a given region and for the planning of the territory in order to limit the socio-economic losses linked to the seismic event.
In the following we consider the differential model that aims at describing the propagation of seismic wave within Earth's interior. 

Let $\Omega$ be a bounded domain modeling the portion of the Earth where the passage of seismic waves occurs, and let $\partial \Omega$ be its boundary that can be decomposed into three disjoint parts $\Gamma_D,\, \Gamma_N$, and $\Gamma_{A}$. The values of the displacement (Dirichlet conditions), the values of tractions (Neumann conditions), and the values of fictitious tractions (absorbing conditions) are imposed on $\Gamma_D,\, \Gamma_N$, and $\Gamma_{A}$, respectively.
For a temporal interval $(0,T]$, with $T>0$, the equation governing the displacement field $\bm u( \bm x, t)$ of a dynamically disturbed elastic medium can be expressed as 
\begin{equation} \label{eq:WaveEquation}
\rho \partial_{tt}\bm{u} -  \nabla \cdot \bm \sigma    = \bm f \quad  \textrm{ in }\Omega \times (0,T],
\end{equation}
where $\rho$ is the mass density, $\bm f$ defines a suitable seismic source and $\bm \sigma$ is the stress tensor that models the constitutive behaviour of the material. Possible definition for $\bm \sigma$ and $\bm f$ will be discussed in the sequel. Equation \eqref{eq:WaveEquation} is completed by prescribing suitable boundary conditions as well as initial conditions. 
For the latter, by choosing ${\bm u}(\cdot,0) = \partial_t {\bm u}(\cdot,0) = \bm 0$, we suppose the domain to be at rest at the initial observation time.  

\subsection{Seismic waves in viscoelastic media}
The stress tensor $\bm \sigma$ in \eqref{eq:WaveEquation} can be defined in different ways to properly model the behavior of the soil. 
Before presenting the main constitutive laws that can adopted for seismic wave propagation analysis we introduce: (i) the strain tensor $\bm \epsilon$, defined as the symmetric gradient $\bm{\epsilon} (\bm u)= (\nabla \bm u + \nabla^T \bm u)/2$, and (ii) the fourth-order (symmetric and positive definite) stiffness tensor $\mathbb{D}$, encoding the mechanical properties of the medium. It is expressed in term of the first and the second  Lam\'e coefficients, namely $\lambda$ and $\mu$, respectively.  For an elastic material the generalized Hooke's law 
\begin{equation}\label{Hooke_law}
\bm \sigma = \mathbb{D} : \bm \epsilon    
\end{equation}
defines the most general linear relation among all the components of the stress and strain
tensor. In the most general case, i.e. a fully anisotropic material,  equation \eqref{Hooke_law} contains 21 material parameters. However in our case, i.e., for a perfectly isotropic material, \eqref{Hooke_law} can be reduced as
\begin{equation} \label{eq:stress}
\bm \sigma (\bm u) = \lambda \nabla \cdot \bm \epsilon (\bm u) \bm I  + 2 \mu \bm \epsilon(\bm u),
\end{equation}
where $\bm I$ is the identity tensor. 

Pure elastic constitutive laws are not physically representative in the field of application of interest.
A first model for visco-elastic media can be handled by modifying the equation of motion
according to \cite{KOSLOFF1986363}. In the approach, the inertial term  $\rho\partial_{tt} {\bm u}$
in \eqref{eq:WaveEquation} is replaced by $\rho\partial_{tt} {\bm u} + 2\rho\zeta\partial_t {\bm u} + \rho\zeta^2 {\bm u}$ where $\zeta$ is an attenuation parameter. As a matter of fact, with this substitution, i.e., 
\begin{equation}\label{viscoel_model}
\rho\partial_{tt} {\bm u} + 2\rho\zeta\partial_t {\bm u} + \rho\zeta^2 {\bm u} -  \nabla \cdot \bm \sigma    = \bm f \quad  \textrm{ in }\Omega \times (0,T],
\end{equation}
all frequency components are equally attenuated with distance, resulting in a frequency proportional quality factor $Q>0$ \cite{Morozov2008}. 
A second attenuation model is obtained by considering materials ``endowed with memory'' in the sense that
the state of stress at the instant $t$ depends on all the deformations undergone by the material in previous times. This behaviour can be expressed through an integral equation of the form 
\begin{equation}\label{eq:viscomodel2}
    \bm \sigma (t) = \int_0^t \frac{\partial \mathbb{D}} {\partial t} (t-s) : \bm{\epsilon} (s) \,ds,
\end{equation}
where the stress $\bm \sigma$ is  determined  by  the  entire  strain history. Implicit in this law is the dependence  on time of the Lam\'e parameters $\lambda$ and $\mu$, cf. \cite{komatitsch2002spectral,moczo2014finite}. We remark that, by using \eqref{eq:viscomodel2} it is possible to obtain an almost constant quality factor $Q$ in a suitable frequency range, cf. \cite{moczo2014finite}.
\\\\
We conclude this section by addressing proper boundary conditions to supplement equation \eqref{viscoel_model}. Several conditions can be set to correctly define the interaction between the wave and the domain boundary. 
Dirichlet conditions are employed to prescribe the behaviour of the displacement field, i.e. $\bm u = \bm{g}_D$ on $\Gamma_D$, while Neumann conditions $\bm \sigma  \bm n = \bm{g}_N$ on $\Gamma_N$ represent the distribution of surface loads. Here $\bm n$ denotes the outward pointing normal unit vector with respect to $\partial \Omega$. 

For geophysical applications, since the domain of interest $\Omega$ represents a portion of the Earth the following boundary conditions are commonly adopted: (i) free-surface condition, i.e.  $\bm \sigma \bm n = \bm 0$ for the top Earth's surface and (ii) transparent boundary conditions $\bm \sigma \bm n = \bm t$ for the remaining lateral and bottom surfaces. 
The latter consists in modeling the absorbing boundary layers by introducing a fictitious traction term $\bm t= \bm t(\bm u, \partial_t \bm u)$, consisting of a linear combination of displacement space and time derivatives. Examples can be found in \cite{Quarteroni1998,Givoli2010}. In Figure ~\ref{fig::dominio_elastico} we report an illustrative example of domain $\Omega$ together with boundary conditions. 

\begin{figure}
    \centering
    \includegraphics[width=0.5\textwidth]{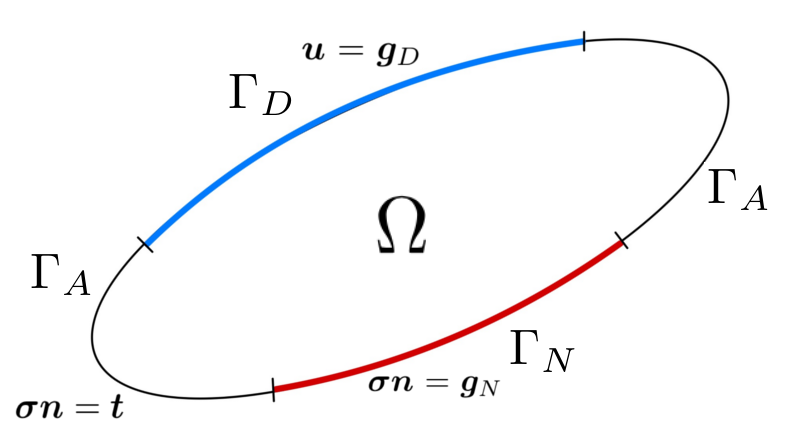}
    \caption{Example of domain $\Omega$ with boundary $\partial \Omega$ divided into a Dirichlet $\Gamma_D$, a Neumann  $\Gamma_N$ and an absorbing $\Gamma_A$ part.  }
    \label{fig::dominio_elastico}
\end{figure}

\subsection{Seismic waves in porous media}

Modeling wave propagation through fluid-saturated porous rock is crucial for the characterization of
the seismic response of geologic formations. In this case, the effects stemming from the interaction between the viscous fluid and the solid matrix have to be taken into account. In the framework of Biot's poro-elasticity theory \cite{biot1941general, biot1}, the total stress tensor $\widetilde{\bm \sigma}$ additionally depends on the pore pressure $p$ according to the following relation
\begin{equation}\label{eq:total_stress}
\widetilde{\bm \sigma}(\bm u, p) = 
\bm\sigma(\bm u) -\beta p \bm I,
\end{equation}
with $\bm\sigma(\bm u)$ defined as in \eqref{eq:stress} and $0<\beta\le 1$ denoting the Biot coefficient. 
Adding to the momentum balance equation \eqref{eq:WaveEquation}
the inertial term corresponding to the filtration displacement $\bm w = \phi({\bm w}_f - \bm u)$, where $\phi>0$ is the reference porosity and ${\bm w}_f$ the fluid displacement, leads to 
\begin{equation}\label{eq:balance_porel}
\rho\partial_{tt}\bm{u} + \rho_f\partial_{tt}\bm{w}
-\nabla\cdot\widetilde{\bm{\sigma}}=\bm{f} \qquad\text{in }\Omega\times(0,T].
\end{equation}
Here, the average density $\rho$ is given by $\rho=\phi\rho_f+(1-\phi)\rho_s$, where  $\rho_f>0$ is the saturating fluid density and $\rho_s>0$ is the solid density.
To derive Biot's wave equations in Section \ref{sec:poro-el}, the rheology of the porous material \eqref{eq:total_stress} and the momentum balance \eqref{eq:balance_porel} are combined with the dynamics of the fluid system described by Darcy's law and the conservation of fluid mass in the pores.

Two major differences have been observed when dealing with poro-elastic media instead of elastic ones: (i) the attenuation due to wave-induced fluid flow and (ii) the presence of an additional compressional wave of the second kind (slow P-wave), which becomes a diffusive mode in the low-frequency range, cf. \cite{carcione2014book}. As observed in \cite{delapuente2008}, this slow P-wave is mainly localized near the material heterogeneities or the source.

\subsection{Modelling the seismic source}

Seismic wave can be generated by different natural and artificial sources. Depending on the problem's configuration, one can consider a single point-source, an incidence plane wave or a finite-size rupturing fault. 

We can define a point-wise force $\bm{f}$ acting on a point $\bm{x}_0\in\Omega$ in the $i^{th}$ direction as
\begin{equation}\label{point-force}
	\bm{f}(\bm x,t) = f(t) \bm{e}_i \delta(\bm x-\bm{x}_0),
\end{equation}
where $\bm{e}_i $ is the unit vector of the $i^{th}$ Cartesian axis, $\delta(\cdot)$ is the delta distribution, and $f(\cdot)$ is a function of time. The expression of $f(\cdot)$ can be selected among different waveforms. Here, we report one of the most employed one, i.e. the Ricker wavelet \cite{ricker1953}, defined as 
\begin{equation} \label{eq:ricker}
f(t) =A_0 (1-2 \beta_p (t-t_0)^2) e^{ -\beta_p (t-t_0)^2}, \quad \beta_p = \pi^2 f_p^2,
\end{equation}
where $A_0$ is the wave amplitude, $f_p$ is the peak frequency of the signal and $t_0$ is a fixed reference time.

To define a vertically incident plane wave one can consider a uniform distribution of body forces along the plane $z=z_0$ of the form $\bm f(\bm x,t) = f(t) \bm{e}_i \delta(z-z_0)$. The latter generates a displacement in the $i^{th}$ direction given by 
\begin{equation} \label{eq:vert_pwave}
	\bar{u}_i(\bm x,t)=\frac{1}{2\rho c} H(t- \frac{\mid z-z_0\mid}{c})\int_0^{t - \frac{\mid z-z_0\mid}{c}}f(\tau) \,d\tau,
\end{equation}
where $H(\cdot)$ is the Heaviside function and $c$ (that can be equal either to  $c_P = \sqrt{\lambda+2\mu/\rho}$ or $c_S=\sqrt{\mu/\rho}$) is the wave velocity, see \cite{graff1975wave}. By taking the derivative of \eqref{eq:vert_pwave} with respect to time and evaluating the result at $z=z_0$ we can express $f(t)$ as $f(t) = 2 \rho c \frac{\partial \bar{u}_i}{\partial t}$.
Finally, we introduce one of the most important seismic input for seismic wave propagation that is the so called double-couple source force. A point double-couple or moment-tensor source localized in the computational domain $\Omega$ is often adopted to simulate small local or near-regional earthquakes.
Its mathematical representation is based on the seismic moment tensor $\bm m (\bm{x},t)$, defined in \cite{aki2002quantitative} as
\begin{equation*} 
m_{ij} (\bm x,t) = \frac{M_0 (\bm x,t)}{V} (s_{F,i} n_{F,j} + s_{F,j} n_{F,i}) \qquad i,j=1,..,d,
\end{equation*} 
where $\bm n_F$ and $\bm{s}_F$ denote the fault normal and the rake vector along the fault, respectively. $M_0 (\bm x,t)$ describes the time history  of the moment release at $\bm x$ and $V$ is the force elementary volume. The equivalent body force distribution is finally obtained through the relation $\bm f (\bm x,t)=-\nabla \cdot \bm{m}(\bm x,t)$, see \cite{faccioli19972d}. 


\section{Discrete setting for PolyDG methods}\label{sec:DGnotation}

In this section we define the notation related to the subdivision of the computational domain $\Omega$ by means of polytopic meshes.
We introduce a {polytopic} mesh $\mathcal{T}_h$ made of general polygons (in 2d) or polyhedra (in 3d). We denote such polytopic elements by $\kappa$, define by $\mid \kappa \mid$ their measure and by $h_\kappa$ their diameter, and set $h = \max_{\kappa \in \mathcal{T}_h} h_\kappa$.
We let a polynomial degree $p_\kappa\geq 1$ be associated with each element $\kappa\in\mathcal{T}_h$ and we denote by $p_h:\mathcal{T}_h\to\mathbb{N}^*=\{n\in\mathbb{N}\, \mid \, n\ge 1\}$ the piecewise constant function such that $(p_h)_{\mid\kappa}=p_\kappa$. The discrete space is introduced as follows: $\bm{V}_h=[\mathcal{P}_{p_h}(\mathcal{T}_h)]^d$, where $\mathcal{P}_{p_h}(\mathcal{T}_h)$ is the space of piecewise polynomials in $\Omega$ of total degree less than or equal to $p_\kappa$ in any $\kappa\in\mathcal{T}_h$.

In order to deal with polygonal and polyhedral elements, we define an \textit{interface} of $\mathcal{T}_h$ as the intersection of the $(d-1)$-dimensional faces of any two neighboring elements of $\mathcal{T}_h$. If $d=2$, an interface/face is a line segment and the set of all interfaces/faces is  denoted by $\mathcal{F}_h$.
When $d=3$, an interface can be a general polygon that we assume could be further decomposed into a set of planar triangles collected in the set  $\mathcal{F}_h$.  
We decompose the faces of $\mathcal{T}_h$ into the union of \textit{internal} ($i$) and \textit{boundary} ($b$) faces, respectively, i.e.:
$\mathcal{F}_h=\mathcal{F}_h^{i}\cup\mathcal{F}^{b}_h$. Moreover we further split the boundary faces as $\mathcal{F}^{b}_h = \mathcal{F}^{D}_h \cup \mathcal{F}^{N}_h \cup \mathcal{F}^{A}_h$, meaning that on $\mathcal{F}^{D}_h$ (resp. $\mathcal{F}^{N}_h$ and $\mathcal{F}^{A}_h$) Dirichlet (resp. Neumann and absorbing) boundary conditions are applied.

Following \cite{CangianiDongGeorgoulisHouston_2017}, we next introduce the main assumption on $\mathcal{T}_h$. 
\begin{definition}\label{def::polytopic_regular}
A mesh $\mathcal{T}_h$ is said to be \textit{polytopic-regular} if for any $ \kappa \in \mathcal{T}_h$, there exists a set of non-overlapping $d$-dimensional simplices contained in $\kappa$, denoted by $\{S_\kappa^F\}_{F\subset{\partial \kappa}}$, such that for any $F\subset\partial \kappa$, the following condition holds:
\begin{equation}
h_\kappa\lesssim d\mid S_\kappa^F \mid \, \mid F \mid^{-1}.
\label{eq::firstdef}
\end{equation}
\end{definition}
\begin{assumption}
The sequence of meshes $\{\mathcal{T}_h\}_h$ is assumed to be \textit{uniformly} polytopic regular in the sense of Definition \ref{def::polytopic_regular}.
\label{ass::regular}
\end{assumption}
\noindent
We remark that this assumption does not impose any restriction on either the number of faces per element nor their measure relative to the diameter of the element they belong to. Under Assumption \ref{ass::regular}, the following \textit{trace-inverse inequality} holds:
\begin{align}
& \norm{v}_{L^2(\partial \kappa)}\lesssim ph_\kappa^{-1/2}\norm{v}_{L^2(\kappa)}
&& \forall \ \kappa\in\mathcal{T}_h \ \forall v \in \mathcal{P}_p(\kappa),
\label{eq::traceinv}
\end{align}
see \cite[Section 3.2]{CangianiDongGeorgoulisHouston_2017} for the detailed proof and a complete discussion on inverse estimates.
In order to avoid technicalities, we also make the following \textit{$hp$-local bounded variation property} assumption.
\begin{assumption}
For any pair of neighboring elements $\kappa^\pm\in\mathcal{T}_h$, it holds $h_{\kappa^+}\lesssim h_{\kappa^-}\lesssim h_{\kappa^+}$ and $\ p_{\kappa^+}\lesssim p_{\kappa^-}\lesssim p_{\kappa^+}$.
\label{ass::3}
\end{assumption}

Next, following  \cite{Arnoldbrezzicockburnmarini2002}, for sufficiently piecewise smooth scalar-, vector- and tensor-valued fields $\psi$, $\bm{v}$ and $\bm{\tau}$, respectively, we define the averages and jumps on each \textit{interior} face $F\in\mathcal{F}_h^{i}$ shared by the elements $\kappa^{\pm}\in \mathcal{T}_h$ as follows:
\begin{align*}
 \llbracket\psi\rrbracket  &=  \psi^+\bm{n}^++\psi^-\bm{n}^-, 
 &&\llbracket\bm{v}\rrbracket  = \bm{v}^+\otimes\bm{n}^++\bm{v}^-\otimes\bm{n}^-, 
 &&\llbracket\bm{v}\rrbracket_{\bm n}  = \bm{v}^+\cdot\bm{n}^++\bm{v}^-\cdot\bm{n}^-, \\
 \llbrace\psi \rrbrace &= \frac{\psi^++\psi^-}{2}, 
&&\llbrace\bm{v}\rrbrace   = \frac{\bm{v}^++\bm{v}^-}{2}, 
&&\llbrace\bm{\tau}\rrbrace  = \frac{\bm{\tau}^++\bm{\tau}^-}{2}, 
\end{align*}
where $\otimes$ is the tensor product in $\mathbb{R}^3$, $\cdot^{\pm}$ denotes the trace on $F$ taken within the interior of $\kappa^\pm$, and $\bm{n}^\pm$ is the outward unit normal vector to $\partial \kappa^\pm$. Accordingly, on \textit{boundary} faces $F\in\mathcal{F}_h^{b}$, we set
$
\llbracket\psi\rrbracket = \psi\bm{n},\
\llbrace\psi \rrbrace = \psi,\
\llbracket\bm{v}\rrbracket= \bm{v}\otimes\bm{n},\
\llbracket\bm{v}\rrbracket_{\bm n} =\bm{v}\cdot\bm{n},\
\llbrace\bm{v}\rrbrace= \bm{v},\
\llbrace\bm{\tau}\rrbrace= \bm{\tau}.$ 
\\\\Finally, we introduced some important concepts employed for the convergence analysis of PolydG methods presented in the sequel, namely, the mesh covering $\mathcal{T}_{\sharp}$ and the Stein extension operator $\widetilde{\mathcal{E}}$. Indeed, the latter are used to extend standard $hp$-interpolation estimates on simplices to polytopal elements. We refer the reader to  \cite{CangianiGeorgoulisHouston_2014,Antonietti_et_al_2015,CangianiDongGeorgoulis_2017, CangianiDongGeorgoulisHouston_2017} for all the details. 

A covering $\mathcal{T}_{\sharp}=\{\mathcal{K}_\kappa\}$ related to the polytopic mesh $\mathcal{T}_h$ is a set of shape regular $d-$dimensional simplices $\mathcal{K}_\kappa$ such that for each $\kappa \in \mathcal{T}_h$ there exists a  $\mathcal{K}_\kappa \in \mathcal{T}_{\sharp}$ such that $\kappa \subset \mathcal{K}_\kappa$.  
We suppose that there exits a covering $\mathcal{T}_{\sharp}$
of $\mathcal{T}_{h}$ and a positive constant $C_\Omega$, independent of the mesh parameters, such that 
\begin{equation*}
    \max_{\kappa \in \mathcal{T}_h} card \{ \kappa' \in \mathcal{T}_h: \kappa' \cap \mathcal{K}_{\kappa} \neq \emptyset, \; \mathcal{K}_{\kappa} \in \mathcal{T}_{\sharp} \; \rm{s.t.} \; \kappa \subset \mathcal{K}_\kappa  \} \leq C_\Omega,
\end{equation*}
and $h_{T_\kappa} \lesssim h_\kappa$ for each pair $\kappa \in \mathcal{T}_\kappa$ and $T_\kappa \in \mathcal{T}_{\sharp}$ with $\kappa \subset \mathcal{T}_h$. This latter assumption assures that, when the computational mesh $\mathcal{T}_h$ is refined, the amount of overlap present in the covering $\mathcal{T}_{\sharp}$ remains bounded.

For an open bounded domain $\Sigma \subset\mathbb{R}^d$ and a polytopic mesh $\mathcal{T}_h$ over $\Sigma$ satisfying Assumption \ref{ass::regular}, we can introduce the Stein extension operator $\widetilde{\mathcal{E}}:H^m(\kappa)\rightarrow H^m(\mathbb{R}^d)$ \cite{stein1970singular}, for any $\kappa\in\mathcal{T}_h$ and $m\in\mathbb{N}_0$, such that $\tilde{\mathcal{E}}v\mid_\kappa=v$ and $\norm{\tilde{\mathcal{E}}v}_{m,\mathbb{R}^d}\lesssim\norm{v}_{m,\kappa}$. The corresponding vector-valued version mapping $\bm H^m(\kappa)$ onto $\bm H^m(\mathbb{R}^d)$ acts component-wise and is denoted in the same way. 
In what follows, for any $\kappa\in\mathcal{T}_h$, we will denote by $\mathcal{K}_\kappa$ the simplex belonging to $\mathcal{T}_{\sharp}$ such that $\kappa\subset\mathcal{K}_\kappa$.

\subsection{Time integration}\label{sec:time_int}
We introduce here the time integration scheme used for the numerical simulations shown in the following sections. First, we anticipate that, fixing a suitable basis for the discrete space, all the semi-discrete PolydG formulations that we will introduce in the following can be written in the general abstract form:
\begin{equation}\label{eq:2ndorder_abstract}
\text{For any time }\ t>0,\qquad\bm{M}_h\ddot{X_h}(t)+\bm{D}_h\dot{X_h}(t)+\bm{A}_h X_h(t)= S_h (t),
\end{equation} 
where the precise definition of the unknown $X_h$, the right-hand side $S_h$, and the matrices $\bm{M}_h, \bm{D}_h$, and $\bm{A}_h$ will be given in the forthcoming sections. Assuming that $\bm{M}_h$ is invertible, we have
\begin{equation}\label{eq:2ndorder_time}
\ddot{X_h}(t) =\bm{M}_h^{-1}(S_h(t) -\bm{D}_h\dot{X_h}(t) -\bm{A}_h X_h(t))
= \mathcal{L}_h(t,X_h,\dot{X}_h).
\end{equation}

Then, we discretize the interval $[0,T]$  by introducing a timestep $\Delta t>0$, such that $\forall \ k\in\mathbb{N}$, $t_{k+1}-t_k=\Delta t$ and define $X_h^k=X_h(t^k)$ and $Z^k_h=\dot{X}_h(t^k)$. Finally, to integrate in time \eqref{eq:2ndorder_abstract} we apply the Newmark$-\beta$ scheme defined by introducing a Taylor expansion for $X_h$ and $Z_h=\dot{X_h}$, respectively:
\begin{equation}
\begin{cases}
X^{k+1}_h=X^k_h +\Delta t Z^k_h +\Delta t^2(\beta_N\mathcal{L}^{k+1}_h + (\frac{1}{2}-\beta_N)\mathcal{L}^{k}_h),\\[5pt]
Z^{k+1}_h=Z^k_h +\Delta t(\gamma_N\mathcal{L}^{k+1}_h+(1-\gamma_N)\mathcal{L}^k_h),
\end{cases}
\label{eq::newmark_taylor}
\end{equation}
being $\mathcal{L}^k_h=\mathcal{L}_h(t^k, X^k_h, Z^k_h)$ and the Newmark parameters $\beta_N$ and $\gamma_N$  satisfy the constraints $0\leq\gamma_N\leq 1$, $0\leq2\beta_N\leq1$. The typical choices are $\gamma_N=1/2$ and $\beta_N=1/4$, for which the scheme is unconditionally stable and second order accurate. We also remark that, when $\mathcal{L}_h = \mathcal{L}_h(t^k, X^k_h)$, $\beta_N=0$, and $\gamma_N=1/2$, the Newmark scheme \eqref{eq::newmark_taylor} reduces to the leap-frog scheme which is explicit and second order accurate. We next address in detail the PolydG semi-discrete approximation of the problems we are considering.


\section{Elastic wave propagation in heterogeneous media}\label{sec:linear_elasticity}

Hereafter, for the sake of presentation, we will consider the linear visco elastodyamics model, i.e. equations \eqref{viscoel_model} and \eqref{eq:stress}. We suppose $\partial \Omega = \Gamma_D \cup \Gamma_N$ and we consider homogeneous Dirichlet and Neumann boundary conditions on $\Gamma_D$ and $\Gamma_N$, respectively.
The system of equations can be recast as 
\begin{equation} \label{eq:modelwave}
\begin{cases}
\rho\partial_{tt} {\bm u} + 2\rho\zeta\partial_t {\bm u} + \rho\zeta^2 {\bm u} - \nabla \cdot \bm \sigma = \bm{f} & \textrm{ in }\Omega \times (0,T], \\
\bm \sigma  = \mathbb{D} \bm{\epsilon}(\bm u) = \lambda \nabla \cdot \bm \epsilon (\bm u) \bm I  + 2 \mu \bm \epsilon(\bm u),
& \textrm{ in }\Omega \times (0,T], \\
\bm u = \mathbf{0} & \textrm{ on } \Gamma_D \times (0,T], \\
\bm{\sigma}  \, \bm n = \bm 0 & \textrm{ on }\Gamma_N  \times (0,T], \\
(\bm u, \partial_t\bm{u})  = (\bm{u}_0, \bm{u}_1) & \textrm{ in }\Omega \times \{0\}. 
\end{cases}
\end{equation}
The case with non homogenous Neumann conditions is treated in \cite{antonietti2016stability},
while absorbing conditions are considered in \cite{Quarteroni1998}. 
Finally, we refer to  \cite{Antonietti2018,riviere2007discontinuous} for a detailed analysis of viscoelastic attenuation models. 
We suppose the mass density $\rho$ and the Lam\'e parameters $\lambda$ and $\mu$ to be strictly positive  bounded functions of the space variable $\bm x$, i.e. $\rho,\lambda,\mu \in L^\infty(\Omega)$. We also suppose the forcing term $\bm f$ to be regular enough, i.e., $\bm{f} \in L^2 ((0, T]; \bm{L}^2 (\Omega))$ and that the initial conditions $(\bm{u}_0, \bm{u}_1) \in \bm{H}^1_0(\Omega) \times \bm{L}^2(\Omega)$. 
The weak formulation of problem~\eqref{eq:modelwave} reads as follows: for all $t \in (0,T]$ find $\bm u = \bm u(t)\in {\bm H}^{1}_{0}(\Omega)$ such that
\begin{equation}\label{el_weak:1p}
(\rho \partial_{tt} \bm{u}, \bm v )_\Omega + (2\rho\zeta\partial_t {\bm u}, \bm v)_{\Omega}  + (\rho\zeta^2 {\bm u}, \bm v)_\Omega +  \mathcal{A}^e(\bm u, \bm v) = (\bm f, \bm v)_\Omega \quad  \forall\, \bm v \in  {\bm H}^{1}_{0}(\Omega),
\end{equation}
where for any $\bm u,\bm v \in {\bm H}^{1}_{0}(\Omega)$ we have set 
\begin{equation}\label{eq:el_term}
\mathcal{A}^e(\bm u, \bm v) = 
( \bm{\sigma}(\bm u) , \bm{\epsilon}(\bm v))_\Omega.
\end{equation}
Problem~\eqref{el_weak:1p} is well-posed and  its unique solution $\bm u \in C((0,T]; {\bm H}^{1}_{0}(\Omega)) \cap C^1((0,T]; \bm{L}^{2}(\Omega) )$, see \cite[Theorem 8-3.1]{raviart1983introduction}.

\subsection{Semi-discrete formulation}\label{sec:dg_semidisc}

Using the notation introduced in Section~\ref{sec:DGnotation}, we define the PolyDG semi-discretization of  problem~\eqref{el_weak:1p}: for all $t \in (0,T]$, find $\bm{u}_h=\bm{u}_h(t)\in \bm{V}_h$ such that
\begin{equation}\label{el_elastodynamic_variational_formulation}
(\rho \partial_{tt} \bm{u}_{h}, \bm v_h)_\Omega +  (2\rho\zeta\partial_t {\bm u}_h, \bm v_h)_{\Omega}  + (\rho\zeta^2 {\bm u}_h, \bm v_h)_\Omega +  \mathcal{A}_h^e(\bm{u}_h, \bm{v}_h) = (\bm f, \bm v_h)_\Omega
\end{equation}
for any $\bm{v}_h\in \bm{V}_h$,
supplemented with the initial conditions $(\bm{u}_h(0),\partial_{t} {\bm u}_h(0))=(\bm{u}_h^0, \bm{u}_h^1)$,  where $\bm{u}_h^0, \bm{u}_h^1 \in \bm{V}_h$ are suitable approximations of $\bm{u}_0$ and $\bm{u}_1$, respectively. Here, we also assume the tensor $\mathbb{D}$ and the density $\rho$ to be element-wise constant over $\mathcal{T}_h$.
The bilinear form $\mathcal{A}_h^e: \bm{V}_h \times \bm{V}_h \rightarrow \mathbb{R}$ is defined as 	 
	\begin{multline}\label{el_B_tilde}
	\mathcal{A}_h^e(\bm u,\bm v) 
	= (\bm \sigma(\bm u) , \bm{\epsilon}(\bm v))_{\mathcal{T}_h}
	  -( \llbrace \bm \sigma(\bm u)\rrbrace,  \llbracket \bm v  \rrbracket)_{\mathcal{F}_h^i\cup\mathcal{F}_h^D}   \\
	  -( \llbracket \bm u  \rrbracket, \llbrace \bm \sigma(\bm v)\rrbrace)_{\mathcal{F}_h^i\cup\mathcal{F}_h^D}
	 +  (\eta\,\llbracket \bm u  \rrbracket, \llbracket \bm v  \rrbracket)_{\mathcal{F}_h^i\cup\mathcal{F}_h^D}	    
	\end{multline}
for all $\bm u,\bm v \in \bm V_h$. Here, we adopt the compact notation $(\cdot,\cdot)_{\mathcal{T}_h} = \sum_{T\in \mathcal{T}_h} (\cdot,\cdot)_T$ and $(\cdot,\cdot)_{\mathcal{F}_h^I \cup \mathcal{F}_h^D} = \sum_{F\in \mathcal{F}_h^I \cup \mathcal{F}_h^D} (\cdot,\cdot)_F$.
The penalization function $\eta:\mathcal{F}_h\rightarrow\mathbb{R}^+$  in \eqref{el_B_tilde} is defined face-wise as 
\begin{equation}\label{el_penalization_parameter}
\eta=\sigma_0  
\begin{cases}
\underset{\kappa\in\{\kappa_1,\kappa_2\} } \max \left(\overline{\mathbb{D}}_\kappa {p}^2_\kappa h_\kappa^{-1}\right), & F \in \mathcal{F}_h^i, \, F \subset \partial \kappa_1 \cap \partial \kappa_2, \\
\overline{\mathbb{D}}_\kappa {p}^2_\kappa h_\kappa^{-1}, &  F\in\mathcal{F}_h^D,  \, F \subset \partial \kappa \cap \Gamma_D.
\end{cases}
\end{equation}
where $\overline{\mathbb{D}}_\kappa=\mid (\mathbb{D}\mid_\kappa)^{1/2}\mid^2_2$ for any $\kappa \in \mathcal{T}_h$ (here $\mid\cdot\mid_2$ is the operator norm induced by the $l_2$-norm on ${\mathbb R}^n$, where $n$ denotes the dimension of the space of symmetric second-order tensors, i.e., $n=3$ if $d=2$, $n=6$ if $d=3$), and $\sigma_0$ is a (large enough) positive parameter at our disposal.

By fixing a basis for $\bm V_h$ and denoting by ${\rm U}_h$ the vector of the expansion coefficients in the chosen basis of the unknown $\bm{u}_h$, the semi-discrete formulation \eqref{el_elastodynamic_variational_formulation} can be written equivalently as:
\begin{equation}\label{eq:algebraicwave}
\bm M_\rho \ddot{\rm U}_h(t) + \bm M_{2\rho\zeta} \dot{\rm U}_h(t) +  (\bm A^e + \bm M_{\rho\zeta^2}) {\rm U}_h(t) = {\rm F}_h(t) \quad \forall t \in (0,T),
\end{equation}
with $\bm M$ denoting the mass matrix in $\bm{V}_h$, $\bm A^e$ the stiffness matrix corresponding to the bilinear form $\mathcal{A}^e$, and with initial conditions ${\rm U}_h(0) = \rm{U}_0$ and  $\dot{\rm U}_h(0) = \rm{U}_1$. Note that ${\rm F}_h$ is the vector representations of the linear functional $(\bm f , \bm v_h)_\Omega$.
Formulation \eqref{eq:algebraicwave} can be recast in the form \eqref{eq:2ndorder_abstract}-\eqref{eq:2ndorder_time} by setting $X_h={\rm U}_h$, $\bm{M}_h=\bm M_\rho$, $\bm{D}_h=\bm M_{2\rho\zeta}$, $\bm{A}_h= \bm A^e + \bm M_{\rho\zeta^2}$ and $S_h = {\rm F}_h$, cf. Section \ref{sec:time_int}.

\subsection{Stability and convergence results}

In this section we recall the stability and convergence results for the semidiscrete PolyDG formulation  \eqref{el_elastodynamic_variational_formulation}. We refer the reader to \cite{AntoniettiMazzieri_2018} and to \cite{Antonietti2021} for all the details.
The results are obtained  in the following energy norm
\begin{equation}\label{el_energy_norm} 
\| \bm u_h(t) \|^2_{\rm{E}} = \| \rho^{\frac12} {\partial_t{\bm u}}_h(t) \|^2_\Omega + \|\rho^{\frac12}\zeta {\bm u}_h(t) \|^2_\Omega +  \| \bm u_h(t) \|_{\textrm{DG},e}^2 \quad \forall  t\in (0,T],
\end{equation}
where 
\begin{equation}\label{el_dg_norm}
\| \bm v  \|_{\textrm{DG},e}^2  =  
\norm{\mathbb{D}^{\frac{1}{2}}\bm \epsilon (\bm v )}_{\mathcal{T}_h}^2
+  \norm{\eta^{\frac{1}{2}} \, \llbracket \bm v \rrbracket }_{\mathcal{F}_h^I \cup \mathcal{F}_h^D} ^2 \qquad
\forall \bm v  \in \bm V_h \oplus \bm H^1_0(\Omega),
\end{equation}
with $\|\cdot \|_{\mathcal{T}_h}^2 = (\cdot,\cdot)_{\mathcal{T}_h}$ and $\|\cdot \|_{\mathcal{F}_h^I \cup \mathcal{F}_h^D}^2 = (\cdot,\cdot)_{\mathcal{F}_h^I \cup \mathcal{F}_h^D}$.

\begin{prop}\label{el_prop:stability}
Let $\bm f\in L^2((0,T];\bm{L}^2(\Omega))$. Let $\bm u_h\in C^1((0,T];\bm V_h)$ be the approximate solution of~\eqref{el_elastodynamic_variational_formulation} obtained with the stability constant $\sigma_0$ defined in~\eqref{el_penalization_parameter} 
chosen sufficiently  large. Then,
\begin{equation}
\| \bm u_h(t) \|_{\textrm{E}} \lesssim \| \bm u_h^0 \|_{\textrm{E}} + \int\limits_{0}^{t}\| \bm f(\tau) \|_{\bm L^2(\Omega)}\, d\tau,    \qquad \forall0<t\leq T,
\label{eq:stability_elasticty}
\end{equation}
\end{prop}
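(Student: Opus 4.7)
The plan is the classical energy argument tailored to the PolydG bilinear form $\mathcal{A}_h^e$. I would test the semi-discrete equation \eqref{el_elastodynamic_variational_formulation} with $\bm v_h=\partial_t\bm u_h$. The first and fourth terms then become perfect time derivatives, using the symmetry of $\mathcal{A}_h^e$:
\begin{equation*}
(\rho\partial_{tt}\bm u_h,\partial_t\bm u_h)_\Omega=\tfrac{1}{2}\tfrac{d}{dt}\|\rho^{1/2}\partial_t\bm u_h\|_\Omega^2,\qquad \mathcal{A}_h^e(\bm u_h,\partial_t\bm u_h)=\tfrac{1}{2}\tfrac{d}{dt}\mathcal{A}_h^e(\bm u_h,\bm u_h).
\end{equation*}
The third term yields $\tfrac{1}{2}\tfrac{d}{dt}\|\rho^{1/2}\zeta\bm u_h\|_\Omega^2$, while the damping term $(2\rho\zeta\partial_t\bm u_h,\partial_t\bm u_h)_\Omega=2\|(\rho\zeta)^{1/2}\partial_t\bm u_h\|_\Omega^2\ge 0$ can be discarded since it has the favourable sign.

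The first key ingredient is the coercivity of $\mathcal{A}_h^e$ on $\bm V_h$ in the norm $\|\cdot\|_{\mathrm{DG},e}$: for $\sigma_0$ sufficiently large one has $\mathcal{A}_h^e(\bm v_h,\bm v_h)\gtrsim \|\bm v_h\|_{\mathrm{DG},e}^2$. This is the usual DG-coercivity argument: bound the consistency terms $(\llbrace\bm\sigma(\bm v_h)\rrbrace,\llbracket\bm v_h\rrbracket)_{\mathcal{F}_h^i\cup\mathcal{F}_h^D}$ by Cauchy--Schwarz and Young, then absorb the stress averages into the volume term $\|\mathbb{D}^{1/2}\bm\epsilon(\bm v_h)\|_{\mathcal{T}_h}^2$ using the trace-inverse inequality \eqref{eq::traceinv} together with the definition \eqref{el_penalization_parameter} of $\eta$; the leftover jump contributions are absorbed into the penalty term by choosing $\sigma_0$ large enough. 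Combining this with boundedness, $\mathcal{A}_h^e(\bm v_h,\bm v_h)$ is equivalent to $\|\bm v_h\|_{\mathrm{DG},e}^2$, so the time-differentiated quantity on the left-hand side is equivalent to $\tfrac{1}{2}\tfrac{d}{dt}\|\bm u_h(t)\|_{\mathrm{E}}^2$.

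Collecting the identities and applying Cauchy--Schwarz on the right-hand side yields
\begin{equation*}
\tfrac{1}{2}\tfrac{d}{dt}\|\bm u_h(t)\|_{\mathrm{E}}^2\lesssim \|\bm f(t)\|_{\bm L^2(\Omega)}\,\|\partial_t\bm u_h(t)\|_\Omega\lesssim \|\bm f(t)\|_{\bm L^2(\Omega)}\,\|\bm u_h(t)\|_{\mathrm{E}},
\end{equation*}
where in the last step I use $\rho\in L^\infty(\Omega)$ bounded below to control $\|\partial_t\bm u_h\|_\Omega$ by $\|\rho^{1/2}\partial_t\bm u_h\|_\Omega\le\|\bm u_h\|_{\mathrm{E}}$. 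Setting $\phi(t)=\|\bm u_h(t)\|_{\mathrm{E}}$, the inequality reads $\phi(t)\phi'(t)\lesssim\|\bm f(t)\|_{\bm L^2(\Omega)}\phi(t)$, so on the set where $\phi>0$ one has $\phi'(t)\lesssim\|\bm f(t)\|_{\bm L^2(\Omega)}$. Integrating from $0$ to $t$ gives \eqref{eq:stability_elasticty}. A standard regularisation (working with $\phi(t)+\varepsilon$ and letting $\varepsilon\to 0$) handles the points where $\phi$ vanishes, so no additional hypothesis is required.

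The main obstacle, and the step that genuinely uses the polytopal framework, is the coercivity of $\mathcal{A}_h^e$: one must verify that the constant hidden in \eqref{eq::traceinv} (which is uniform under Assumption \ref{ass::regular}) together with the face-wise definition \eqref{el_penalization_parameter} of $\eta$ provide enough damping of the jump terms to absorb the consistency contributions independently of the mesh, of $p_h$, and of the heterogeneous coefficients. Once this is available, the energy identity and the Gronwall-type argument above are routine; the factorisation $\tfrac{d}{dt}\phi^2\lesssim \|\bm f\|\,\phi$ is precisely what delivers the $L^1$-in-time bound on $\bm f$ in \eqref{eq:stability_elasticty} rather than the cruder $L^2$-in-time bound.
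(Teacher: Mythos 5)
Your argument is correct and is essentially the standard energy argument that the paper itself relies on: the paper does not reproduce a proof of Proposition \ref{el_prop:stability} (it defers to \cite{AntoniettiMazzieri_2018,Antonietti2021}), but your strategy — test with $\partial_t\bm u_h$, discard the non-negative damping term, invoke coercivity of $\mathcal{A}_h^e$ for $\sigma_0$ large, and close with the $\phi\phi'\lesssim\|\bm f\|\phi$ trick to obtain the $L^1$-in-time bound — matches both the cited proofs and the paper's own proof of the analogous poro-elastic estimate (Proposition \ref{prop:stability_porel}). One small presentational caution: equivalence of $\mathcal{A}_h^e(\bm v_h,\bm v_h)$ and $\|\bm v_h\|_{\mathrm{DG},e}^2$ does not let you replace one by the other \emph{inside} the time derivative, so you should carry the exact quantity $\mathcal{E}(t)=\tfrac12\bigl(\|\rho^{1/2}\partial_t\bm u_h\|_\Omega^2+\|\rho^{1/2}\zeta\bm u_h\|_\Omega^2+\mathcal{A}_h^e(\bm u_h,\bm u_h)\bigr)$ through the differential inequality and convert to $\|\cdot\|_{\mathrm{E}}$ only at the initial and final times; this is a trivial rewording, not a gap.
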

\noindent where $\| \bm u_h(0) \|^2_{\rm{E}} = \| \rho^{\frac12} {\bm u}_{1,h} \|^2_\Omega + \|\rho^{\frac12}\zeta {\bm u}_{0,h} \|^2_\Omega +  \| \bm u_{0,h} \|_{\textrm{DG},e}^2$,
being ${\bm u}_{0,h},{\bm u}_{1,h} \in \bm V_h$ suitable approximation of the initial conditions $\bm u_0$ and $\bm u_1$, respectively.
\noindent
The proof of the previous stability estimate can be found for instance in \cite{AntoniettiMazzieri_2018,Antonietti2021}. 
From \eqref{eq:stability_elasticty} it is possible to conclude that the PolyDG approximation is dissipative. Indeed, when $\bm f = \bm 0$ (no external forces) the energy of the system at rest  $\| \bm u_h^0 \|_{\textrm{E}}$ is not conserved through time.  

Concerning the convergence results of the PolyDG scheme we report in the following the main result. We refer the reader to \cite{AntoniettiMazzieri_2018} for the details and for the proof of the following theorem. 

\begin{theorem} \label{thm:wave_eq_error}
Let Assumption \ref{ass::regular} and Assumption \ref{ass::3} be satisfied and assume that the exact solution $\bm u$ of~\eqref{el_weak:1p} is sufficiently regular. For any time $t \in [0,T]$, let $\bm u_h\in \bm V_h$ be the PolyDG solution of problem~\eqref{el_elastodynamic_variational_formulation} obtained with a penalty parameter $\sigma_0$ appearing in ~\eqref{el_penalization_parameter} sufficiently large. Then, for any time $t \in (0,T]$ the following bound holds
\begin{equation}\label{el_eq:error-estimate}
\begin{split}
\| \bm u-\bm u_h \|_{\textrm{E}}(t) 
&\lesssim \sum_{\kappa \in \mathcal{T}_h} \frac{h_\kappa^{s_\kappa-1}}{p_\kappa^{m_\kappa-3/2}} \left(
\mathcal{I}_{m_\kappa}^{\mathcal{T}_\sharp}(\bm u)(t) + \int_{0}^{t} \mathcal{I}_{m_\kappa}^{\mathcal{T}_\sharp}(\partial_t{\bm u})(s) \, ds\right),
\end{split}
\end{equation}
where
\begin{equation*}
    \mathcal{I}_{m_\kappa}^{\mathcal{T}_\sharp}(\bm u) = \| \widetilde{\mathcal{E}} \bm u \|_{\bm H^{m_\kappa}(\mathcal{T}_\sharp)} + \frac{h_\kappa}{p_\kappa^{3/2}}\| \widetilde{\mathcal{E}} (\partial_t{\bm u})\|_{\bm H^{m_\kappa}(\mathcal{T}_\sharp)} 
 + \| \widetilde{\mathcal{E}} \bm \sigma (\bm u)  \|_{ \bm{H}^{m_\kappa}(\mathcal{T}_\sharp)}
\end{equation*}
with $s_\kappa = \min(p_\kappa +1, m_\kappa)$ for all $\kappa \in \mathcal{T}_h$. The hidden constant depends on the material parameters and the shape-regularity of the covering $\mathcal{T}_\sharp$, but is independent of $h_\kappa$, $p_\kappa$.
\end{theorem}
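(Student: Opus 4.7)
The plan is to follow the classical parabolic-type error analysis adapted to the PolyDG setting for second-order hyperbolic problems, combined with the $hp$-interpolation estimates on polytopal meshes that rely on the covering $\mathcal{T}_\sharp$ and the Stein extension $\widetilde{\mathcal{E}}$. First I would introduce a suitable interpolant $\Pi_h \bm u \in \bm V_h$ of the exact solution built element-wise on $\mathcal{K}_\kappa \supset \kappa$ via $\widetilde{\mathcal{E}}$, so that one has the $hp$-optimal bounds (from \cite{CangianiDongGeorgoulisHouston_2017})
\[
\|\bm u - \Pi_h \bm u\|_{L^2(\kappa)} + \frac{h_\kappa}{p_\kappa} \|\nabla(\bm u - \Pi_h \bm u)\|_{L^2(\kappa)} \lesssim \frac{h_\kappa^{s_\kappa}}{p_\kappa^{m_\kappa}} \|\widetilde{\mathcal{E}}\bm u\|_{\bm H^{m_\kappa}(\mathcal{K}_\kappa)},
\]
and analogous trace estimates on $\partial\kappa$ (carrying the extra $p_\kappa^{-1/2}$ factor expected from the trace-inverse inequality \eqref{eq::traceinv}). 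Then I would split the error as $\bm u - \bm u_h = (\bm u - \Pi_h \bm u) - (\bm u_h - \Pi_h \bm u) = \eta - \xi_h$, with $\eta$ the projection error and $\xi_h \in \bm V_h$ the discrete error.

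The next step is to derive the error equation. Since $\mathcal{A}_h^e$ is consistent (the exact solution $\bm u$ satisfies \eqref{el_elastodynamic_variational_formulation} upon integration by parts, thanks to the continuity of $\bm u$ and $\bm\sigma(\bm u) \bm n$ across interior faces), subtracting \eqref{el_elastodynamic_variational_formulation} from the exact formulation gives
\[
(\rho \partial_{tt}\xi_h, \bm v_h)_\Omega + (2\rho\zeta \partial_t \xi_h, \bm v_h)_\Omega + (\rho \zeta^2 \xi_h, \bm v_h)_\Omega + \mathcal{A}_h^e(\xi_h, \bm v_h) = \mathcal{R}(\eta; \bm v_h),
\]
for all $\bm v_h \in \bm V_h$, where $\mathcal{R}(\eta; \bm v_h)$ collects the mass, damping, and stiffness contributions coming from $\eta$. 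I would test with $\bm v_h = \partial_t \xi_h$, exploiting the symmetry of $\mathcal{A}_h^e$ to recognise time derivatives of $\tfrac12 \|\rho^{1/2}\partial_t \xi_h\|_\Omega^2$, $\tfrac12\|\rho^{1/2}\zeta \xi_h\|_\Omega^2$, and $\tfrac12\mathcal{A}_h^e(\xi_h,\xi_h)$. Continuity and coercivity of $\mathcal{A}_h^e$ on $\bm V_h$ (which follows, for $\sigma_0$ large enough, from the standard SIP argument and the trace-inverse bound \eqref{eq::traceinv}) then ensure $\mathcal{A}_h^e(\xi_h,\xi_h) \gtrsim \|\xi_h\|_{\mathrm{DG},e}^2$, so the left-hand side controls $\|\xi_h(t)\|_{\mathrm E}^2$ in the sense of \eqref{el_energy_norm}.

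The delicate part is bounding the right-hand side after integration on $[0,t]$. The mass-type terms $(\rho \partial_{tt}\eta,\partial_t\xi_h)$ and $(\rho\zeta^2 \eta,\partial_t\xi_h)$ are estimated by Cauchy--Schwarz, Young's inequality and the $L^2$ interpolation bound on $\eta$ and $\partial_{tt}\eta$ (which explains the $h_\kappa/p_\kappa^{3/2}$-weighted $\partial_t \bm u$ contribution in $\mathcal{I}_{m_\kappa}^{\mathcal{T}_\sharp}$). The troublesome term is $\mathcal{A}_h^e(\eta,\partial_t\xi_h)$: I would remove the time derivative from $\xi_h$ by writing
\[
\int_0^t \mathcal{A}_h^e(\eta,\partial_t \xi_h)\,ds = \mathcal{A}_h^e(\eta(t),\xi_h(t)) - \mathcal{A}_h^e(\eta(0),\xi_h(0)) - \int_0^t \mathcal{A}_h^e(\partial_t \eta,\xi_h)\,ds,
\]
so that continuity of $\mathcal{A}_h^e$ in the DG norm and Young's inequality absorb $\|\xi_h\|_{\mathrm{DG},e}$ into the left-hand side. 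To control $\|\eta\|_{\mathrm{DG},e}$ I would use the interpolation estimates together with a flux-type bound on $\|\{\bm\sigma(\eta)\}\|_{\mathcal F_h}$, estimating the latter by Cauchy--Schwarz on faces, the trace inequality applied to $\widetilde{\mathcal{E}}\bm\sigma(\bm u)$, and standard approximation on $\mathcal{K}_\kappa$, which is precisely the reason the term $\|\widetilde{\mathcal{E}}\bm\sigma(\bm u)\|_{\bm H^{m_\kappa}(\mathcal{T}_\sharp)}$ appears in $\mathcal{I}_{m_\kappa}^{\mathcal{T}_\sharp}$.

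Finally, Gronwall's lemma applied to the resulting integral inequality yields $\|\xi_h(t)\|_{\mathrm E}$ bounded by the right-hand side of \eqref{el_eq:error-estimate}, with constants depending on $\rho,\lambda,\mu,\zeta$ and $T$ but not on $h_\kappa,p_\kappa$. The triangle inequality $\|\bm u-\bm u_h\|_{\mathrm E} \le \|\eta\|_{\mathrm E} + \|\xi_h\|_{\mathrm E}$, combined with the same interpolation estimates for $\|\eta\|_{\mathrm E}$, closes the proof. The main obstacle, in my view, is the uniform continuity of $\mathcal{A}_h^e$ on the non-conforming space $\bm V_h \oplus \bm H^1_0(\Omega)$: the flux term $(\{\bm\sigma(\eta)\},\llbracket\xi_h\rrbracket)$ cannot be bounded by $\|\eta\|_{\mathrm{DG},e}$ alone without invoking $\bm\sigma(\bm u)$ directly, which is why the hypothesis of sufficient regularity of $\bm u$ (so that $\widetilde{\mathcal{E}}\bm\sigma(\bm u) \in \bm H^{m_\kappa}(\mathcal{T}_\sharp)$) is used and the third term in $\mathcal{I}_{m_\kappa}^{\mathcal{T}_\sharp}$ is unavoidable.
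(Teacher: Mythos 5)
Your proposal is correct and follows essentially the same route as the proof the paper relies on (which it defers to \cite{AntoniettiMazzieri_2018}): split the error via an $hp$-interpolant built element-wise through the Stein extension on the covering $\mathcal{T}_\sharp$, derive the error equation by consistency, test with $\partial_t\xi_h$, integrate by parts in time the term $\mathcal{A}_h^e(\eta,\partial_t\xi_h)$, and close with coercivity/continuity of $\mathcal{A}_h^e$ in the augmented DG norm together with Gr\"onwall's lemma and the triangle inequality. You also correctly identify the origin of the $\|\widetilde{\mathcal{E}}\bm\sigma(\bm u)\|_{\bm H^{m_\kappa}(\mathcal{T}_\sharp)}$ contribution (control of the face fluxes of the projection error) and of the $h_\kappa p_\kappa^{-3/2}$-weighted terms (the $L^2$-rate bounds on the time derivatives of the projection error).
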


\subsection{Verification test}\label{sec:ver_test_el}

We  solve the wave propagation problem \eqref{eq:modelwave} in $\Omega=(0,1)^2$, choosing $\lambda=\mu=\rho=\zeta=1$ and assuming that the exact solution $\bm u$ is given by
\begin{equation}
\bm u(\bm x,t) = \sin{(\sqrt{2}\pi t)}
\begin{bmatrix}
-\,\sin{(\pi x)^2} \sin{(2\pi y) }\\
\phantom{-}\sin{(2\pi x)} \sin{(\pi y)^2}
\end{bmatrix}.
\end{equation}
Dirichlet boundary conditions and initial conditions are set accordingly. We set the final time $T=1$ and chose a time step $\Delta t=10^{-4}$ of the leap-frog scheme, cf. \eqref{eq::newmark_taylor}. The penalty parameter $\sigma_0$ appearing in \eqref{el_penalization_parameter} has been set equal to 10.
We  compute the discretization error  by varying the polynomial degree $p_{\kappa}=p$, for any $\kappa \in \mathcal{T}_h$, and the number of polygonal elements $N_{el}$. 

\begin{figure}
    \centering
    \includegraphics[width=0.4\textwidth]{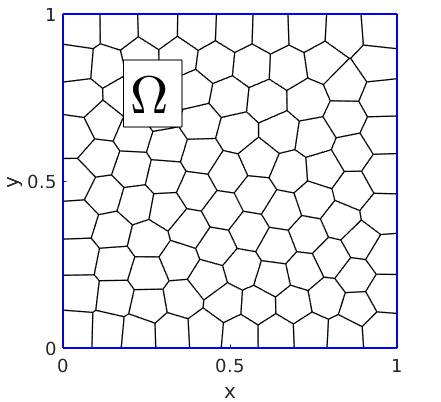}
    \caption{Test case of Section~\ref{sec:ver_test_el}. Example of computational domain having 100 polygonal elements.}
    \label{fig:domain_ver_el}
\end{figure}

In Figure~\ref{fig:convergence_elastic} (left), we report the computed energy error $\|\bm u-\bm u_h\|_{E}$ at final time $T$ as a function of the mesh size $h$. We retrieve the algebraic convergence proved in~\eqref{el_eq:error-estimate} for a polynomial degree $p=2,3,4$. 
Next, we report the computed  $\bm L^2$-error $\|e_{\bm u}\|_{\bm L^2(\Omega)} = \|\bm u-\bm u_h\|_{\bm L^2(\Omega)}$ at time $T$ obtained on a shape-regular polygonal grid (cf. Figure~\ref{fig:domain_ver_el}) versus the polynomial degree $p$, which varies from $1$ to $5$, in semilogarithmic scale. We fix the number of polygonal elements as $N_{el} = 160$. In this case we observe an exponential converge in $p$, as shown in Figure~\ref{fig:convergence_elastic} (right). 

\begin{figure}
\begin{minipage}{0.6\textwidth}
    \begin{tabular}{|c|c|c|c|}
    \hline 
    h \textbackslash \, p  &  2 & 3 & 4 \\
    \hline
     0.35   & 1.1078e+0 &  1.4101e-1 & 1.8809e-2 \\
     0.26   & 4.9112e-1 &  4.3925e-2 & 4.3186e-3 \\
     0.19   & 1.8714e-1 &  1.4661e-2 & 1.0703e-3 \\
     0.13   & 8.0198e-2 &  4.7140e-3 & 2.6145e-4  \\
     \hline 
     rate & 2.23 & 2.89 & 3.71 \\
     \hline
    \end{tabular} 
    \label{tab:my_label}
\end{minipage}
\hspace{3mm}
\begin{minipage}{0.4\textwidth}
\includegraphics[width=0.9\textwidth]{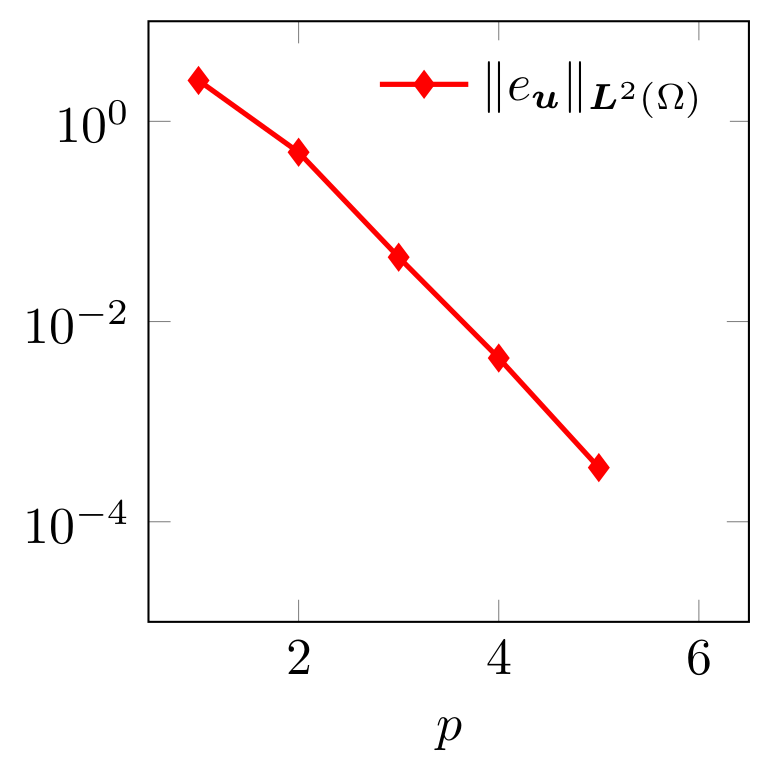}
\end{minipage}
    \caption{Test case of Section~\ref{sec:ver_test_el}. Computed energy error as a function of the mesh size $h$ for polynomial degree $p=2,3,4$. The computed rate of convergence is also reported in the last row, cf. \eqref{el_eq:error-estimate} (left). Computed $\bm L^2$-error as a function of the polynomial degree $p$ in a semilogarithmic scale  by fixing the  number of polygonal elements as $N_{el} = 160$ (right).}
    \label{fig:convergence_elastic}
\end{figure}


\section{Poro-elastic media} \label{sec:poro-el}

In this section, we consider a poro-elastic material occupying a polyhedral domain $\Omega_p\subset\Omega$ modeled by equations \eqref{eq:total_stress} and \eqref{eq:balance_porel}. The low-frequency Biot's system \cite{biot1} can be written as 
\begin{equation} \label{eq:model_porel}
\begin{cases}
\rho \partial_{tt} \bm{u} + \rho_f \partial_{tt}\bm{w} -\nabla\cdot\widetilde{\bm{\sigma}} = \bm{f} & \textrm{ in }\Omega_p \times (0,T], \\
\widetilde{\bm\sigma} = \lambda \nabla \cdot \bm \epsilon (\bm u) \bm I  + 2 \mu \bm \epsilon(\bm u) -\beta p \bm I
& \textrm{ in }\Omega_p \times (0,T], \\
\rho_f \partial_{tt} \bm{u} + \rho_w \partial_{tt}\bm{w} + \frac{\eta}{k}\dot{\bm{w}}
+\nabla p = \bm{g} & \textrm{ in }\Omega_p \times (0,T], \\
p = -m(\beta \nabla\cdot\bm{u}+\nabla\cdot \bm{w}) & \textrm{ in }\Omega_p \times (0,T], \\
\bm u = \mathbf{0}\qquad \text{and }\quad \bm w \cdot \bm n = 0 & \textrm{ on } \Gamma_{pD} \times (0,T], \\
\widetilde{\bm{\sigma}}  \, \bm n = \bm 0 \;\;\, \text{ and }\quad p=0 & \textrm{ on }\Gamma_{pN}  \times (0,T], \\
(\bm u, \partial_t\bm{u})  = (\bm{u}_0, \bm{u}_1) & \textrm{ in }\Omega_p \times \{0\}, \\ 
(\bm w, \partial_t\bm{w})  = (\bm{w}_0, \bm{w}_1) & \textrm{ in }\Omega_p \times \{0\},
\end{cases}
\end{equation}
where the density $\rho_w$ is given by $\rho_w=a \phi^{-1}\rho_f$ with tortuosity $a>1$, $\eta$ represents the dynamic viscosity of the fluid, $k$ is the absolute permeability, and $m$ denotes the Biot modulus. As in the previous section, we assume that the model coefficients $\rho_f, \rho_w, \eta k^{-1}, m \in L^\infty(\Omega_p)$ are strictly positive scalar fields and that the source term $\bm f,\bm g$ and the initial conditions $(\bm{w}_0, \bm{w}_1)$ are regular vector fields, namely $\bm f,\bm g\in L^2((0,T];\bm L^2(\Omega_p))$ and $(\bm{w}_0, \bm{w}_1)\in \bm H_0({\rm div},\Omega_p)\times\bm L^2(\Omega_p)$. The third and fourth equations in \eqref{eq:model_porel} correspond to the dynamic Darcy's law and the conservation of fluid mass, respectively. For the sake of simplicity, in \eqref{eq:model_porel} we have also assumed that the clamped region $\Gamma_{pD}\subset\partial\Omega_p$ is impermeable and a null pore pressure condition is prescribed on the Neumann boundary $\Gamma_{pN} = \partial\Omega_p\setminus\Gamma_{pD}$, cf. Figure \ref{fig::dominio_poroso}. We remark that more general boundary conditions can be treated up to minor modifications.

\begin{figure}
    \centering
    \includegraphics[width=0.5\textwidth]{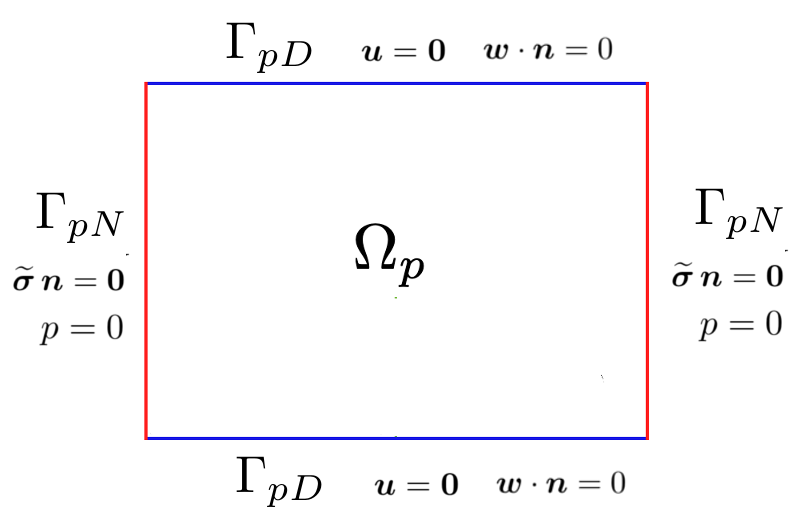}
    \caption{Example of a porous domain $\Omega_p$ together with mixed boundary conditions on $\Gamma_{pD}$ and $\Gamma_{pN}$.}
    \label{fig::dominio_poroso}
\end{figure}

In what follows, we focus on the two-displacement formulation  of the low frequency poro-elasticity problem \cite{matuszy}, that is obtained by inserting the expression of the total stress $\widetilde{\bm \sigma}$ and the pore pressure $p$ in the other equations in \eqref{eq:model_porel}. The corresponding weak formulation reads: for all $t \in (0,T]$ find $(\bm u (t), \bm w (t)) \in {\bm H}^{1}_{0}(\Omega_p)\times {\bm H}_{0}({\rm div},\Omega_p)$ such that
\begin{multline}\label{eq:porel_weak}
\mathcal{M}^p((\partial_{tt}\bm{u},\partial_{tt}\bm{w}), (\bm v, \bm z))
+(\eta k^{-1}\partial_{t}\bm w, \bm z)_{\Omega_p} +
\mathcal{A}^e(\bm u, \bm v)+ \mathcal{A}^p(\beta\bm{u}+\bm w, \beta\bm{v}+\bm z)\\
= (\bm f, \bm v)_{\Omega_p} + (\bm g, \bm z)_{\Omega_p}, \quad\forall (\bm v, \bm z) \in {\bm H}^{1}_{0}(\Omega_p)\times {\bm H}_{0}({\rm div},\Omega_p),
\end{multline}
with $\mathcal{A}^e:{\bm H}^{1}_{0}(\Omega_p)\times{\bm H}^{1}_{0}(\Omega_p)\to\mathbb{R}$ defined as the restriction to $\Omega_p$ of the function in \eqref{eq:el_term} and the bilinear forms $\mathcal{M}^p, \mathcal{A}^p$ defined as
\begin{equation}\label{eq:porel_term}
  \begin{aligned}
  \mathcal{M}^p((\bm{u},\bm{w}),(\bm v, \bm z)) &= 
  (\rho\bm{u}+\rho_f\bm{w},\bm v)_{\Omega_p} +(\rho_f\bm{u}+\rho_w\bm{w},\bm z)_{\Omega_p}, \\
  \mathcal{A}^p(\bm w, \bm z) &=
  (m\nabla\cdot\bm w, \nabla\cdot\bm{z})_{\Omega_p},
  \end{aligned}
\end{equation}
for all $(\bm u, \bm w), (\bm v, \bm z) \in {\bm H}^{1}_{0}(\Omega_p)\times {\bm H}_{0}({\rm div},\Omega_p)$.
The well-posedness of the low-frequency poro-elasticity problem \eqref{eq:porel_weak} has been established in \cite[Section 5.2]{ezziani} in the framework of semigroup theory.

\subsection{Semi-discrete formulation}\label{sec:semidisc_porel}

Proceeding as in Section \ref{sec:dg_semidisc}, we derive the semi-discrete PolydG approximation of problem \eqref{eq:porel_weak}. We introduce a polytopic mesh $\mathcal{T}_h^p$ of $\Omega_p$ satisfying Assumptions \ref{ass::regular} and \ref{ass::3} and denote by $\mathcal{F}_h^p$ the set of faces of $\mathcal{T}_h^p$.
Here, we consider the same polynomial space for both the discrete solid displacement $\bm u_h$ and filtration displacement $\bm w_h$, i.e. $\bm u_h,\bm w_h\in\bm V_h^p=(\mathcal{P}_{p_h}(\mathcal{T}_h^p))^d$, and we assume that all the model coefficients are piecewise constant over $\mathcal{T}_h^p$. The PolydG semi-discrete problem consists in finding, for all $t\in(0,T]$, the solution $(\bm u_h (t), \bm w_h (t)) \in \bm{V}_h^p\times\bm{V}_h^p$ such that 
\begin{multline}\label{eq:porel_discrete}
\mathcal{M}^p((\partial_{tt}\bm{u}_h,\partial_{tt}\bm{w}_h), (\bm v_h, \bm z_h))
+(\eta k^{-1}\partial_{t}\bm w_h, \bm z_h)_{\Omega_p} +\mathcal{A}_h^e(\bm u_h, \bm v_h) 
\\ + \mathcal{A}_h^p(\beta\bm{u}_h+\bm w_h,\beta\bm{v}_h+\bm{z}_h)
= (\bm f, \bm v_h)_{\Omega_p} + (\bm g, \bm z_h)_{\Omega_p}, \quad\forall \bm v_h, \bm z_h \in {\bm V}_h^p,
\end{multline}
where $\mathcal{A}_h^e:{\bm V}_h^p\times{\bm V}_h^p\to\mathbb{R}$ is defined as in \eqref{el_B_tilde} and the bilinear form $\mathcal{A}_h^p$ defined such that 
\begin{multline}\label{eq:Ahp}
\mathcal{A}_h^p(\bm{w},\bm{z})=
( m\nabla\cdot\bm{w},\nabla\cdot\bm{z})_{\mathcal{T}_h^p}
-(\llbrace m(\nabla\cdot\bm{w})\rrbrace, \llbracket\bm{z}\rrbracket_{\bm n})_{\mathcal{F}_h^{pi}\cup\mathcal{F}_h^{pD}} \\
- (\llbracket\bm{w}\rrbracket_{\bm n},\llbrace m(\nabla\cdot\bm{z})\rrbrace)_{\mathcal{F}_h^{pi}\cup\mathcal{F}_h^{pD}} +(\gamma \llbracket\bm{w}\rrbracket_{\bm n}, \llbracket\bm{z}\rrbracket_{\bm n})_{\mathcal{F}_h^{pi}\cup\mathcal{F}_h^{pD}},
\end{multline}
for all $\bm w, \bm z \in {\bm V}_h^p$ and the penalization function $\gamma\in L^\infty(\mathcal{F}_h^p)$ is given by
\begin{equation}\label{pen_m_parameter}
\gamma = m_0 
\begin{cases}
\underset{\kappa\in\{\kappa_1,\kappa_2\} } \max \left(m_{\mid\kappa} {p}^2_\kappa h_\kappa^{-1}\right), & F \in \mathcal{F}_h^{pi}, \, F \subset \partial \kappa_1 \cap \partial \kappa_2, \\
m_{\mid\kappa} {p}^2_\kappa h_\kappa^{-1}, &  F\in\mathcal{F}_h^{pD},  \, F \subset \partial \kappa \cap \Gamma_{pD},
\end{cases}
\end{equation}
where $m_0$ is a positive user-dependent parameter. We remark that, owing to the $\bm{H}({\rm div})$-regularity of the filtration displacement $\bm{w}$ solving \eqref{eq:porel_weak}, the penalization term in \eqref{eq:Ahp} acts only on the normal component of the jumps. 
Problem \eqref{eq:porel_discrete} is completed with suitable initial conditions $(\bm{u}_h(0),\bm{w}_h(0),\partial_{t} {\bm u}_h(0), \partial_{t} {\bm w}_h(0))=(\bm{u}_h^0, \bm{w}_h^0, \bm{u}_h^1, \bm{w}_h^1)\in {\bm V}_h^p\times {\bm V}_h^p\times {\bm V}_h^p\times {\bm V}_h^p$. 

We conclude this section by observing that the algebraic representation of the semi-discrete formulation \eqref{eq:porel_discrete} is given by
\begin{equation}\label{eq:algebraic_poro}
\hspace{-2mm}
\left[ \begin{matrix} 
\bm{M}^p_\rho   & \bm{M}^p_{\rho_f} \\
\bm{M}^p_{\rho_f} & \bm{M}^p_{\rho_w}
\end{matrix} \right]  
\left[ \begin{matrix} 
\ddot{U}_h \\
\ddot{W}_h 
\end{matrix} \right] + 
\left[ \begin{matrix} 
0 & 0 \\
0 & \bm{M}_{{\eta}k^{-1}}
\end{matrix} \right]  
\left[ \begin{matrix} 
\dot{U}_h \\
\dot{W}_h 
\end{matrix} \right] 
+\left[ \begin{matrix} 
\bm{A}^e +  \bm{A}^p_{\beta^2} & \bm{A}^p_{\beta}\\
\bm{A}^p_{\beta} & \bm{A}^p 
\end{matrix} \right] 
\left[ \begin{matrix} 
{U}_h \\
{W}_h 
\end{matrix} \right] = 
\left[ \begin{matrix} 
F_h \\
G_h 
\end{matrix} \right] \hspace{-1mm},
\end{equation}
with $[U_h,W_h,\dot{U}_h,\dot{W}_h](0)=[U_0,W_0,U_1,W_1]$ and $[F_h, G_h]^{\rm T}$ corresponding to the vector representation of the right-hand side of \eqref{eq:porel_discrete}. 
Recalling the notation introduced in Section \ref{sec:time_int} and setting $X_h = [U_h, W_h]^{\rm T}$, $S_h =[F_h, G_h]^{\rm T}$, and 
$$
\bm{M}_h = \left[ \begin{matrix} 
\bm{M}^p_\rho   & \bm{M}^p_{\rho_f} \\
\bm{M}^p_{\rho_f} & \bm{M}^p_{\rho_w}
\end{matrix} \right], \quad
\bm{D}_h = \left[ \begin{matrix} 
0 & 0 \\
0 & \bm{M}_{{\eta}k^{-1}}
\end{matrix} \right], \quad
\bm{A}_h = \left[ \begin{matrix} 
\bm{A}^e +  \bm{A}^p_{\beta^2} & \bm{A}^p_{\beta}\\
\bm{A}^p_{\beta} & \bm{A}^p 
\end{matrix} \right],
$$
equation \eqref{eq:algebraic_poro} can be rewritten in the form \eqref{eq:2ndorder_abstract}.

\subsection{Stability and convergence results}

The aim of this section is to establish an a priori estimate for the solution of problem \eqref{eq:porel_discrete}. First, we define for all $\bm u, \bm w\in C^1([0,T];\bm{V}_h^p)$ the energy function 
\begin{multline}\label{eq:porel_norm}
    \| (\bm u, \bm w)(t) \|^2_{\mathcal{E}} = 
    \| \rho_u^{\frac12} {\partial_t{\bm u}}(t) \|^2_{\Omega_p} +
    \| (\rho_f\phi)^{\frac12} \partial_t(\bm u +\phi^{-1}\bm w)(t)\|_{\Omega_p}^2 
    + \| \bm u(t) \|_{\textrm{DG},e}^2 \\
    + \mid (\beta\bm u + \bm w)(t)\mid_{\textrm{DG},p}^2 + 
    \|(\eta/k)^{\frac12}\bm w (0)\|_{\Omega_p}^2 +
    \int_0^t \|(\eta/k)^{\frac12}\partial_t \bm w (s)\|_{\Omega_p}^2\,{\rm d}s,
\end{multline}
with $\rho_u = \frac{\rho_s(1-\phi)}2$, the norm $\|\cdot\|_{\textrm{DG},e}:\bm V_h^p\to\mathbb{R}^+$ defined as in \eqref{el_dg_norm} and
\begin{equation}\label{porel_dg_norm}
\mid \bm z \mid _{\textrm{DG},p}^2  =  
\norm{m^{\frac12} \nabla\cdot \bm z}_{\mathcal{T}_h^p}^2
+\norm{\gamma^{\frac{1}{2}} \,\llbracket\bm z\rrbracket_{\bm n} }_{\mathcal{F}_h^{pI} \cup \mathcal{F}_h^{pD}}^2 \qquad\forall \bm z  \in \bm V_h^p \oplus \bm H_0({\rm div},\Omega_p).
\end{equation}
One can easily check that $\max_{0\le t\le T}\| (\cdot,\cdot)(t) \|^2_{\mathcal{E}}$ defines a norm on $C^1([0,T];\bm{V}_h^p\times\bm V_h^p)$, cf. \cite[Remark 3.2]{Antonietti.ea:21}. We are now ready to derive the stability estimate for the PolydG semi-discretization.
\begin{prop}\label{prop:stability_porel}
Let $\bm f,\bm g\in L^2((0,T];\bm{L}^2(\Omega_p))$ and let $\bm u_h,\bm w_h\in C^1((0,T];\bm V_h^p)$ be the solutions of~\eqref{eq:porel_discrete} obtained with sufficiently large penalization parameters $\sigma_0$ and $m_0$. Let additionally assume that $\rho_u^{-1}, k \eta^{-1}\in L^\infty(\Omega_p)$. Then, it holds
\begin{equation*}
\max_{t\in[0,T]} \left\| (\bm u_h, \bm w_h)(t) \right\|_{\mathcal{E}}
\le \int_0^T \left\|\left(k/\eta\right)^{\frac12}\bm g(s) \right\|_{\Omega_p}^2 {\rm d}s + 
T\int_0^T\left\|\rho_u^{-\frac12}\bm f(s)\right\|^2_{\Omega_p} {\rm d}s +\mathcal{E}_0,
\end{equation*}
with
\begin{multline}\label{eq:initialcond_depend}
    \mathcal{E}_0 = \mathcal{E}_0(\bm u_h^0,\bm w_h^0,\bm u_h^1,\bm w_h^1) = \mathcal{M}^p((\bm u_h^1, \bm w_h^1), (\bm u_h^1, \bm w_h^1))+\mathcal{A}^e_h(\bm u_h^0, \bm u_h^0) \\ + \mathcal{A}^p_h(\beta\bm u_h^0+\bm w_h^0, \beta\bm u_h^0+\bm w_h^0) + \|(\eta/k)^{\frac12}\bm w_h^0\|_{\Omega_p}^2.
\end{multline}
\end{prop}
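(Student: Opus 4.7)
The strategy is a standard energy argument: test the semi-discrete problem \eqref{eq:porel_discrete} with the time derivative of the solution, use symmetry of the three bilinear forms to assemble a total discrete energy, and then carefully absorb the two source contributions. Concretely, I would set $(\bm v_h,\bm z_h)=(\partial_t\bm u_h,\partial_t\bm w_h)$ in \eqref{eq:porel_discrete}. Symmetry of $\mathcal{M}^p$, $\mathcal{A}_h^e$ and $\mathcal{A}_h^p$ (both SIPG-type) makes the three non-dissipative terms reorganize as $\tfrac12\tfrac{d}{dt}\mathcal{E}_h(t)$, where
\[
\mathcal{E}_h(t) := \mathcal{M}^p\bigl((\partial_t\bm u_h,\partial_t\bm w_h),(\partial_t\bm u_h,\partial_t\bm w_h)\bigr) + \mathcal{A}_h^e(\bm u_h,\bm u_h) + \mathcal{A}_h^p(\beta\bm u_h+\bm w_h,\beta\bm u_h+\bm w_h),
\]
while the Darcy term yields $\|(\eta/k)^{1/2}\partial_t\bm w_h\|^2_{\Omega_p}$. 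Integration on $[0,t]$, $t\le T$, produces the basic identity
\[
\tfrac12\mathcal{E}_h(t)+\int_0^t\|(\eta/k)^{1/2}\partial_t\bm w_h\|^2_{\Omega_p}\,ds = \tfrac12\mathcal{E}_h(0)+\int_0^t\bigl[(\bm f,\partial_t\bm u_h)_{\Omega_p}+(\bm g,\partial_t\bm w_h)_{\Omega_p}\bigr] ds.
\]

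Next I would match $\mathcal{E}_h(t)$ with (a multiple of) $\|(\bm u_h,\bm w_h)(t)\|^2_{\mathcal{E}}$. Using $\rho=\phi\rho_f+(1-\phi)\rho_s$ and $\rho_w=a\phi^{-1}\rho_f$, an algebraic expansion gives
\[
\mathcal{M}^p((\bm v,\bm z),(\bm v,\bm z)) = 2\rho_u\|\bm v\|^2_{\Omega_p}+\rho_f\phi\|\bm v+\phi^{-1}\bm z\|^2_{\Omega_p}+(a-1)\phi^{-1}\rho_f\|\bm z\|^2_{\Omega_p},
\]
which (since $a>1$) bounds from below the two $\bm L^2$-pieces of $\|\cdot\|^2_{\mathcal{E}}$ applied to $(\partial_t\bm u_h,\partial_t\bm w_h)$. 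The DG pieces $\|\bm u_h\|^2_{\textrm{DG},e}$ and $|\beta\bm u_h+\bm w_h|^2_{\textrm{DG},p}$ come from coercivity of $\mathcal{A}_h^e$ and $\mathcal{A}_h^p$ on $\bm V_h^p$, valid as soon as $\sigma_0$ and $m_0$ in \eqref{el_penalization_parameter}--\eqref{pen_m_parameter} are large enough, by exactly the argument underlying Proposition \ref{el_prop:stability}. The dissipative integral $\int_0^t\|(\eta/k)^{1/2}\partial_t\bm w_h\|^2$ is already on the left-hand side, and $\|(\eta/k)^{1/2}\bm w_h(0)\|^2$ is absorbed into $\mathcal{E}_0$ via \eqref{eq:initialcond_depend}.

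On the right-hand side the fluid source is bounded by Cauchy--Schwarz and Young,
\[
\int_0^t(\bm g,\partial_t\bm w_h)_{\Omega_p}\,ds\le\tfrac12\int_0^t\|(k/\eta)^{1/2}\bm g\|^2_{\Omega_p}\,ds+\tfrac12\int_0^t\|(\eta/k)^{1/2}\partial_t\bm w_h\|^2_{\Omega_p}\,ds,
\]
the second summand being absorbed into the Darcy dissipation. For the solid source I would combine a sup-in-time with Cauchy--Schwarz in time, namely
\[
\int_0^t(\bm f,\partial_t\bm u_h)_{\Omega_p}\,ds\le\sqrt{t}\,\Bigl(\int_0^t\|\rho_u^{-1/2}\bm f\|^2_{\Omega_p}\,ds\Bigr)^{1/2}\max_{s\in[0,t]}\|\rho_u^{1/2}\partial_t\bm u_h(s)\|_{\Omega_p},
\]
then apply Young's inequality with a small parameter $\epsilon>0$, take the maximum over $t\in[0,T]$, and absorb the resulting $\epsilon\max_{[0,T]}\mathcal{E}_h$ into the left-hand side. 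This produces exactly the contribution $T\int_0^T\|\rho_u^{-1/2}\bm f\|^2_{\Omega_p}$ claimed in the thesis.

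The main obstacle is precisely this last point: the solid momentum balance is purely hyperbolic, so a plain Cauchy--Schwarz followed by Gronwall's lemma would yield an $e^{CT}$ factor on $\bm f$. To extract only a linear factor $T$ one must pair the sup-in-time of the discrete kinetic energy --- itself controlled by $\mathcal{E}_h$ through the decomposition of $\mathcal{M}^p$ above --- with a Cauchy--Schwarz applied to the $L^1_t$-norm of $\|\rho_u^{-1/2}\bm f\|_{\Omega_p}$. The hypothesis $\rho_u^{-1},\,k\eta^{-1}\in L^\infty(\Omega_p)$ is used at this stage, so that the weighted norms appearing with $\bm f$ and $\bm g$ are equivalent to the usual $\bm L^2$-norms of the forcing data, making the weighted Young and duality manipulations well-defined.
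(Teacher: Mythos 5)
Your proposal is correct and follows essentially the same route as the paper's proof: testing \eqref{eq:porel_discrete} with $(\partial_t\bm u_h,\partial_t\bm w_h)$, the same algebraic decomposition of $\mathcal{M}^p$ into $2\|\rho_u^{1/2}\bm v\|^2+\rho_f\phi\|\bm v+\phi^{-1}\bm z\|^2+(a-1)\phi^{-1}\rho_f\|\bm z\|^2$, coercivity of $\mathcal{A}_h^e$ and $\mathcal{A}_h^p$, the weighted Young inequality absorbing the $\bm g$-term into the Darcy dissipation, and the sup-in-time/Cauchy--Schwarz treatment of the $\bm f$-term that yields the linear factor $T$ instead of a Gr\"onwall exponential. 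The only cosmetic difference is the absorption mechanism for $\max_s\|\rho_u^{1/2}\partial_t\bm u_h(s)\|^2$: you invoke an $\epsilon$-Young step, while the paper exploits the spare copy of $\|\rho_u^{1/2}\partial_t\bm u_h(t)\|^2$ left over from the factor $2$ in the $\mathcal{M}^p$ decomposition.
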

\begin{proof}
First, we observe that the bilinear form $\mathcal{M}^p$ is positive definite. Indeed, owing to the definition of the density functions $\rho,\rho_u$, and $\rho_w$ and since $\widetilde{a}=a-1>0$, for all $(\bm v,\bm z)\neq (\bm 0,\bm 0)$ one has
\begin{equation}
\begin{aligned}\label{eq:positive_M}
\mathcal{M}^p((\bm v, \bm{z}), (\bm v, \bm z)) &= 
2\left\|\rho_u^{\frac12} \bm v\right\|^2_{\Omega_p}+
\left\|(\rho_f\phi)^{\frac12}\left(\bm v+\frac{\bm z}{\phi}\right)\right\|^2_{\Omega_p} +
\left\| \frac{(\rho_f\widetilde{a})^{\frac12}\bm z}{\phi^{\frac12}}  \right\|^2_{\Omega_p} \\
&> 2\left\|\rho_u^{\frac12} \bm v\right\|^2_{\Omega_p} +
\left\|(\rho_f\phi)^{\frac12} \left(\bm v+\phi^{-1}\bm z\right)\right\|^2_{\Omega_p} > 0.
\end{aligned}
\end{equation}
Furthermore, if the stability parameters $\sigma_0$ and $m_0$ are chosen sufficiently large, the bilinear forms $\mathcal{A}^e_h$ and $\mathcal{A}^p_h$ are coercive (see \cite[Lemma A.3]{Antonietti.ea:21}), i.e., for all $\bm v_h, \bm z_h\in\bm V_h^p$ it holds
\begin{equation}
\begin{aligned}\label{eq:coercive_Ah}
\mathcal{A}^e_h(\bm v_h, \bm v_h)&\ge\|\bm v_h\|_{\textrm{DG},e}^2, \\
\mathcal{A}^p_h(\beta\bm v_h + \bm z_h, \beta\bm v_h + \bm z_h) &\ge  
\mid\beta\bm v_h + \bm z_h\mid_{\textrm{DG},p}^2.
\end{aligned}
\end{equation}
Then, taking $(\bm v_h, \bm z_h) = (\partial_t\bm u_h, \partial_t\bm w_h)$ in \eqref{eq:porel_discrete} and integrating in time between $0$ and $t\le T$, it is inferred that 
\begin{multline*}
\hspace{-2mm}
[\mathcal{M}^p((\partial_t\bm u_h, \partial_t\bm w_h), (\partial_t\bm u_h, \partial_t\bm w_h)) 
+\mathcal{A}^e_h(\bm u_h, \bm u_h) + \mathcal{A}^p_h(\beta\bm u_h + \bm w_h, \beta\bm u _h + \bm w_h)](t) \\ \hspace{2mm}
+2\int_0^t \left\|\left(\frac\eta k\right)^{\frac12} \partial_t \bm w_h(s) \right\|_{\Omega_p}^2\, {\rm d}s=
2\int_0^t (\bm f, \partial_t \bm u_h)_{\Omega_p} (s) + (\bm g, \partial_t \bm w_h)_{\Omega_p} (s)\, {\rm d}s +\widetilde{\mathcal{E}}_0,
\end{multline*}
with $\widetilde{\mathcal{E}}_0=\mathcal{M}^p((\bm u_h^1, \bm w_h^1), (\bm u_h^1, \bm w_h^1))+\mathcal{A}^e_h(\bm u_h^0, \bm u_h^0) + \mathcal{A}^p_h(\beta\bm u_h^0+\bm w_h^0, \beta\bm u_h^0+\bm w_h^0)$.
Now, using \eqref{eq:positive_M} and \eqref{eq:coercive_Ah} to infer a lower bound for the left-hand side of the previous identity and summing $\|(\eta/k)^{\frac12}\bm w_h^0\|_{\Omega_p}^2$ to both sides of the resulting inequality, we obtain
\begin{multline}\label{eq:basic_stab}
\left\| (\bm u_h, \bm w_h)(t) \right\|_{\mathcal{E}} + 
\left\|\rho_u^{\frac12} \partial_t\bm u_h(t)\right\|^2_{\Omega_p} +
\int_0^t \left\|\left(\eta/k\right)^{\frac12} \partial_t \bm w_h(s) \right\|_{\Omega_p}^2\, {\rm d}s \\
\le 2\int_0^t (\bm f, \partial_t \bm u_h)_{\Omega_p} (s) + (\bm g, \partial_t \bm w_h)_{\Omega_p} (s)\, {\rm d}s +\mathcal{E}_0,
\end{multline}
where $\mathcal{E}_0 = \widetilde{\mathcal{E}}_0 + \|(\eta/k)^{\frac12}\bm w_h^0\|_{\Omega_p}^2$ corresponds to the quantity defined in \eqref{eq:initialcond_depend}. Therefore, to conclude it only remains to bound the right-hand side of \eqref{eq:basic_stab}. To do so, we apply the Cauchy--Schwarz and Young inequalities to infer
$$
2\int_0^t (\bm g, \partial_t \bm w_h)_{\Omega_p} (s)\, {\rm d}s \le  
\int_0^t \left\|\left(\eta/ k\right)^{\frac12} \partial_t \bm w_h(s) \right\|_{\Omega_p}^2 {\rm d}s + 
\int_0^t \left\|\left(k/\eta\right)^{\frac12} \bm g(s) \right\|_{\Omega_p}^2 {\rm d}s
$$
and
$$
\begin{aligned}
2\int_0^t (\bm f, \partial_t \bm u_h)_{\Omega_p} (s)\, {\rm d}s &\le 
\frac1t \int_0^t\left\|\rho_u^{\frac12} \partial_t\bm u_h(s)\right\|^2_{\Omega_p} {\rm d}s + t\int_0^t\left\|\rho_u^{-\frac12} \bm f(s)\right\|^2_{\Omega_p} {\rm d}s \\
&\le \max_{s\in[0,t]} \left\|\rho_u^{\frac12} \partial_t\bm u_h(s)\right\|^2_{\Omega_p} + t \int_0^t\left\|\rho_u^{-\frac12} \bm f(s)\right\|^2_{\Omega_p} {\rm d}s.
\end{aligned}
$$
inserting the previous bounds into \eqref{eq:basic_stab} and taking the maximum over $t\in[0,T]$, yields the assertion.
\end{proof}
\begin{remark}
We observe that, proceeding as in \cite[Lemma 7]{Boffi.Botti.Di-Pietro:16}, it is possible to obtain a stability estimate for problem~\eqref{eq:porel_discrete} requiring $\mu^{-1}\in L^\infty(\Omega_p)$ together with $\bm f\in H^1((0,T],\bm L^2(\Omega_p))$ instead of $\rho_u^{-1}\in L^\infty(\Omega_p)$. The key step is based on estimating the term $\int_0^t (\bm f, \partial_t \bm u_h)_{\Omega_p}$ by using partial integration and the discrete Korn's first inequality \cite[Lemma 1]{Botti.Di-Pietro.Guglielmana:19}.
\end{remark}

For the sake of conciseness, we decide not to present here the convergence analysis for the PolydG formulation of the poro-elastic problem \eqref{eq:porel_discrete}. However, an error estimate can be readily deduced from Theorem \ref{eq:poelac_error-estimate} below, in the case in which the exact solution on the acoustic part of the domain is null.

\subsection{Verification test}\label{ver_test_poro}

We consider problem \eqref{eq:model_porel} in $\Omega_p = (-1,0) \times (0,1)$ and choose as exact solution 
 \begin{align}\label{solution_poroelastic}
& \bm{u}(x,y;t)=
\begin{pmatrix}
x^2 \cos(\frac{\pi x}{2}) \sin(\pi x)
 \\
x^2 \cos(\frac{\pi x}{2}) \sin(\pi x)
\end{pmatrix}
\cos(\sqrt{2}\pi t),
&&
\bm{w}(x,y;t)=-\bm{u}(x,y;t).
\end{align}
As before, Dirichlet boundary conditions and initial conditions are set accordingly. 
The model problem is solved on a sequence of polygonal meshes as the one shown in Figure~\ref{poroelastic-mesh-param} (left), with physical parameters shown in  Figure~\ref{poroelastic-mesh-param} (right). 
\begin{figure}
\begin{minipage}{0.45\textwidth}
\centering
\includegraphics[scale=0.4]{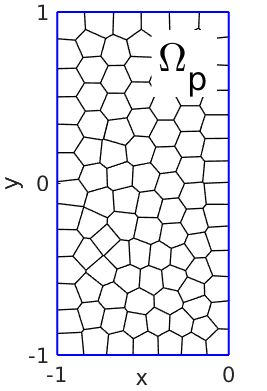}
\end{minipage}
\hfill
\begin{minipage}{0.5\textwidth}
\centering
\begin{tabular}{|c|c|}
\hline
\textbf{Field}      & \textbf{Value} \\ \hline
$\rho_f$, $\rho$ & 1                           \\ \hline
$\lambda$, $\mu$   & 1                           \\ \hline
$a$   & 1                           \\ \hline
$\phi$   & 0.5                           \\ \hline
$\eta$             & 1                       \\ \hline
$\rho_w$           & 2                       \\ \hline
$\beta$, m         & 1                       \\ \hline
\end{tabular}
\label{param}
\end{minipage}
    \caption{Poro-elastic test case of Section \ref{ver_test_poro}.  Polygonal mesh, with $N_{el} = 100$ polygons (left). Physical parameters (right).}\label{poroelastic-mesh-param}
\end{figure}
The final time $T$ has been set equal to $0.25$, considering a timestep of $\Delta t=10^{-4}$ for the Newmark-$\beta$ scheme,  $\gamma_N=1/2$ and $\beta_N=1/4$, cf. \eqref{eq::newmark_taylor}.
The penalty parameters $\sigma_0$ and $m_0$ appearing in definitions \eqref{el_penalization_parameter} and \eqref{pen_m_parameter}, respectively, have been chosen equal to 10. 
\begin{figure}
\begin{minipage}{0.6\textwidth}
    \begin{tabular}{|c|c|c|c|}
    \hline 
    h \textbackslash \, p  &  2 & 3 & 4 \\
    \hline
     0.36   & 5.8052e-1 &  1.0464e-1 & 1.1450e-2 \\
     0.25   & 3.3505e-1 &  3.1326e-2 & 2.9694e-3 \\
     0.18   & 1.7345e-1 &  1.1617e-2 & 8.0532e-4 \\
     0.13   & 8.9824e-2 &  4.7403e-3 & 2.0572e-4  \\
     \hline 
     rate & 2.10 & 3.06 & 3.86 \\
     \hline
    \end{tabular} 
\end{minipage}
\hspace{2mm}
\begin{minipage}{0.4\textwidth}
\includegraphics[width=0.9\textwidth]{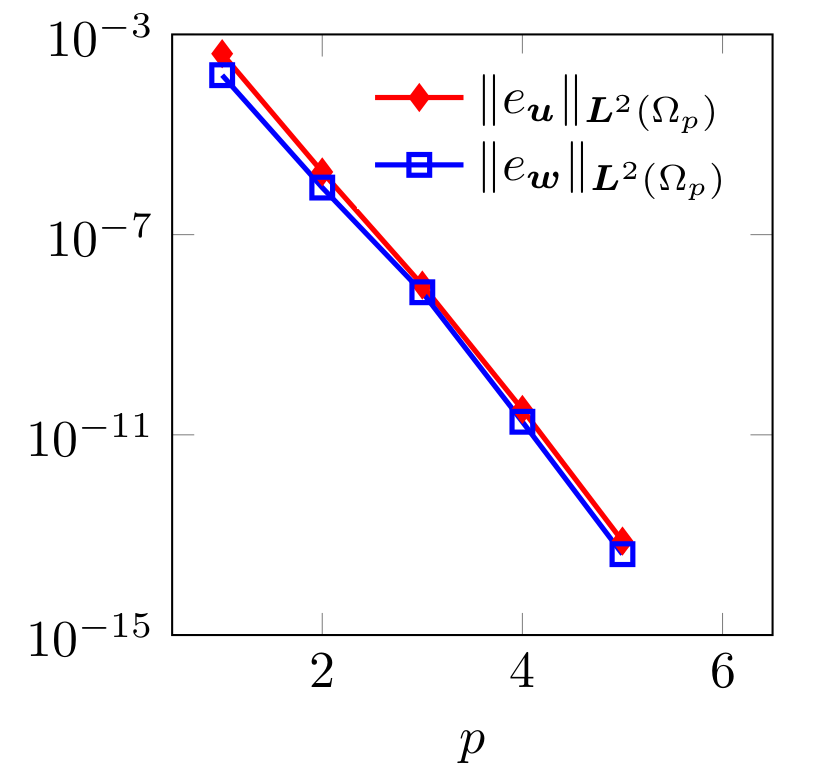}
\end{minipage}
    \caption{Test case of Section \ref{ver_test_poro}. Computed energy error as a function of the mesh size $h$ for polynomial degree $p=2,3,4$. The rate of convergence is also reported in the last row, cf. \eqref{eq:poelac_error-estimate} (left). Computed $\bm L^2$-errors $\|e_{\bm u}\|_{\bm L^2(\Omega_p)} = \|\bm u-\bm u_h\|_{\bm L^2(\Omega_p)}$ and $\|e_{\bm w}\|_{\bm L^2(\Omega_p)} = \|\bm w-\bm w_h\|_{\bm L^2(\Omega_p)}$  as a function of the polynomial degree $p$ in a semilogarithmic scale for $N_{el} = 100$  polygonal elements (right). }
    \label{fig:convergence_poro}
\end{figure}

In Figure~\ref{fig:convergence_poro} (left) we report the computed energy error $\|(\bm u-\bm u_h, \bm w-\bm w_h) \|_{\mathbb{E}}$, cf. \eqref{eq:poelac_error-estimate}, as a function of the mesh size $h$ for a polynomial degree $p=2,3,4$. In this case we retrieve the rate of convergence $\mathcal{O}(h^p)$ as proved in~\eqref{eq:poelac_error-estimate}. 
In Figure~\ref{fig:convergence_poro} (right) we plot the computed $\bm L^2$-errors for the elastic $\bm u$ and filtration $\bm w$ displacements as a function of the polynomial degree $p$ in a semilog-scale. We fix the number of polygonal elements as $N_{el}=100$. We observe an exponential rate of convergence since the solution \eqref{solution_poroelastic} is analytic.


\section{Poro-elastic-acoustic media}\label{sec:poro-el-ac}

In this section, we present the PolydG discretization of the poro-elasto-acoustic interface problem. We refer the reader to \cite{Antonietti.ea:21} for the rigorous mathematical analysis of the model problem and the detailed derivation of the proposed method.
In what follows, we assume that $\Omega$ is decomposed into two disjoint, polygonal/polyhedral subdomains: $\Omega=\Omega_p\cup\Omega_a$, cf. Figure \ref{fig::domain_pores}. 
\begin{figure}
\centering 
\includegraphics[width=0.45\textwidth]{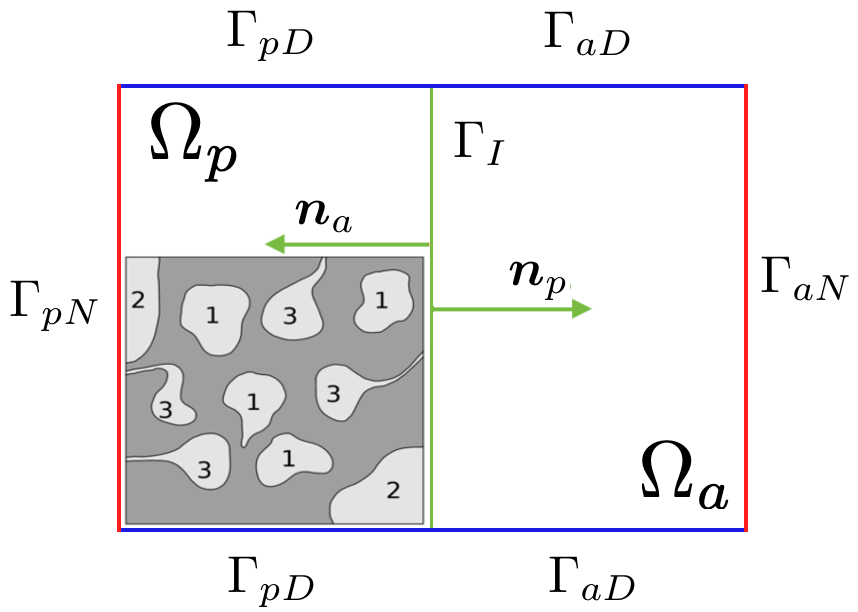}
\caption{\label{fig::domain_pores} Simplified representation of the domain $\Omega=\Omega_p\cup\Omega_a$ for $d=2$. Pores classification in $\Omega_p$:  \textit{sealed} (1), \textit{open} (2) and \textit{imperfect} (3).}
\end{figure}

The two subdomains share part of their boundary, resulting in the interface $\Gamma_I=\partial\Omega_p\cap\partial\Omega_a$. We set $\partial\Omega_p=\Gamma_{pD}\cup\Gamma_{pN}\cup\Gamma_I$ and $\partial\Omega_a=\Gamma_{aD}\cup\Gamma_{aN}\cup\Gamma_I$, where the surface measures of $\Gamma_{pD}$, $\Gamma_{aD}$, and $\Gamma_I$ are assumed to be strictly positive. The outer unit normal vectors to $\partial\Omega_p$ and $\partial\Omega_a$ are denoted by $\bm{n}_p$ and $\bm{n}_a$, respectively, so that $\bm{n}_p=-\bm{n}_a$ on $\Gamma_I$.

The subdomain $\Omega_p$ represents a poro-elasto medium whose dynamical behavior is described by Biot's equations \eqref{eq:model_porel}. In the fluid domain $\Omega_a$, we consider an acoustic wave with constant velocity $c>0$ and mass density $\rho_a>0$ such that $\rho_a, c^{-2}\in L^\infty(\Omega_a)$. 
For a given source term $h\in L^2((0,T]; L^2(\Omega_a))$, the acoustic potential $\varphi$ satisfies
\begin{equation} \label{eq::acousticeq}
\begin{cases}
\rho_a c^{-2}\partial_{tt}\varphi-\nabla \cdot(\rho_a \nabla\varphi) = h
& \textrm{ in }\Omega_a \times (0,T], \\
\varphi = 0 & \textrm{ on } \Gamma_{aD} \times (0,T], \\
\rho_a \nabla\varphi \cdot \bm n_a = 0 & \textrm{ on }\Gamma_{aN}  \times (0,T], \\
(\varphi, \partial_t\varphi)  = (\varphi_0, \varphi_1) & \textrm{ in }\Omega_a \times \{0\}, 
\end{cases}
\end{equation}
with $(\varphi_0, \varphi_1)\in H^1_0(\Omega_a)\times L^2(\Omega_a)$. 
To close the coupled poro-elasto-acoustic problem, some interface conditions on $\Gamma_I$ are needed. Here, we consider physically consistent transmission conditions (see, e.g., \cite{gurevich1999interface} and \cite{chiavassa_lombard_2011})
expressing the continuity of normal stresses, continuity of pressure, and conservation of mass:
\begin{equation}\begin{cases} \label{eq:interface}
-\widetilde{\bm{\sigma}}\bm{n}_p  = \rho_a\dot{\varphi}\bm{n}_p & \textrm{ on }\Gamma_{I}  \times (0,T],    \\
(\tau-1)\dot{\bm w}\cdot\bm{n}_p + \tau p   = \tau \rho_a\dot{\varphi} & \textrm{ on }\Gamma_{I}  \times (0,T],  \\
-(\dot{\bm{u}}+\dot{\bm{w}})\cdot\bm{n}_p = \nabla\varphi\cdot\bm{n}_p& \textrm{ on }\Gamma_{I}  \times (0,T]. 
\end{cases}\end{equation}
The parameter $\tau:\Gamma_I\to[0,1]$ denotes the hydraulic permeability at the interface and models different pores configurations, cf. Figure \ref{fig::domain_pores}. 
In the \textit{open pores} region $\tau^{-1}(1)\subset\Gamma_I$ the second equation in \eqref{eq:interface} reduces to $p=\rho_a\dot{\varphi}$, while in the \textit{sealed pores} subset  $\tau^{-1}(0)$ we have $\dot{\bm{w}}\cdot\bm{n}_p=0$, implying that $\tau^{-1}(0)$ is impermeable. Finally, the \textit{imperfect pores} region $\tau^{-1}((0,1))$ models an intermediate state between \textit{open} and \textit{sealed pores}. For later use, we split the interface into two disjoint (possibly non-connected) subsets $\Gamma_I = \Gamma_I^s\cup\Gamma_I^o$, with
$$
\Gamma_I^s = \tau^{-1}(0)\quad \text{and }\;\, 
\Gamma_I^o = \tau^{-1}((0,1]) = \Gamma_I\setminus\Gamma_I^s.
$$
We remark that the first and second conditions in \eqref{eq:interface} plays the role of a Neumann and a Robin-like conditions for system \eqref{eq:model_porel}, respectively. Similarly, the third equation in \eqref{eq:interface} acts as a Neumann condition for problem \eqref{eq::acousticeq}. The existence and uniqueness of a strong solution to the poro-elasto-acoustic problem coupling equations \eqref{eq:model_porel}, \eqref{eq::acousticeq}, and \eqref{eq:interface} is proved in \cite[Appendix A]{Antonietti.ea:21}.

In order to derive the weak formulation of the coupled problem, we introduce the function $\zeta_\tau:\Gamma_I^o\to\mathbb{R}^+$, defined by $\zeta_\tau= \tau^{-1}(1-\tau)$, and the weighted space
\begin{equation}\label{eq:W_tau}
\bm{W}_{\tau} = 
\{ \bm{z}\in \bm H_0(\textrm{div},\Omega_p) \,\mid\, \zeta_{\tau}^{\frac12}(\bm{z}\cdot\bm{n}_p)_{\mid\Gamma_I^o} \in L^2(\Gamma_I^o),\, (\bm{z}\cdot\bm{n}_p)_{\mid \Gamma_I^s} = 0 \},
\end{equation}
equipped with the norm 
$$
\norm{\bm{z}}_{\bm{W}_{\tau}} = \norm{\bm{z}}_{\Omega_p} + \norm{\nabla\cdot\bm{z}}_{\Omega_p} + 
\norm{\zeta_{\tau}^{\frac12}\ \bm{z}\cdot\bm{n}_p}_{\Gamma_I^o}
\qquad\forall\bm{z}\in\bm{W}_{\tau}.
$$
The weak form of the problem obtained by coupling equations \eqref{eq:model_porel}, \eqref{eq::acousticeq}, and \eqref{eq:interface} reads as: 
for any $t\in(0,T]$, find $(\bm{u},\bm{w},\varphi)(t) \in \bm H^1_0(\Omega_p) \times \bm{W}_{\tau} \times H^1_0(\Omega_a)$ s.t.
\begin{multline}\label{eq:coupled_weakform}
\mathcal{M}((\partial_{tt}\bm{u},\partial_{tt}\bm{w},\partial_{tt}\varphi), (\bm{v},\bm{z},\psi))  +
\mathcal{A}((\bm{u},\bm{w},\varphi),(\bm{v},\bm{z},\psi)) + \mathcal{B} (\partial_t \bm{w},\bm{z}) \\
+ \mathcal{C}(\partial_t\varphi,\bm{v} + \bm{z}) - \mathcal{C}(\partial_t(\bm{u} + \bm{w}), \psi)
= (\bm{f},\bm{v})_{\Omega_p}+(\bm{g},\bm{z})_{\Omega_p} +(h,\psi)_{\Omega_a}
\end{multline}
for all $(\bm{v},\bm{z},\psi) \in \bm H^1_0(\Omega_p) \times \bm{W}_{\tau} \times H^1_0(\Omega_a)$, where we have set
\begin{equation}\label{eq:bilinear_forms}
    \begin{aligned}
    \mathcal{M}((\bm{u},\bm{w},\varphi),(\bm{v},\bm{z},\psi))  & = \mathcal{M}^p((\bm{u},\bm{w}),(\bm{v},\bm{z})) 
    +(\rho_a c^{-2} \varphi, \psi)_{\Omega_a}, \\
    \mathcal{A}((\bm{u},\bm{w},\varphi),(\bm{v},\bm{z},\psi))  & = \mathcal{A}^e(\bm u, \bm v)
    +\mathcal{A}^p(\beta\bm{u}+\bm{w},\beta\bm{v}+\bm{z}) 
    +\mathcal{A}^a(\varphi,\psi), \\
	\mathcal{B} (\bm{w},\bm{z}) & = 
	(\eta k^{-1} \bm{w},\bm{z})_{\Omega_p}+ (\zeta_\tau\, \bm{w}\cdot\bm{n}_p, \bm{\bm{z}}\cdot\bm{n}_p)_{\Gamma_I^o}, \\
	\mathcal{C}(\varphi,\bm{z}) & =\langle \rho_a \varphi,\bm{z}\cdot\bm{n}_p\rangle_{\Gamma_I},
\end{aligned}
\end{equation}
with $\mathcal{M}^p,\mathcal{A}^e,\mathcal{A}^p$ defined as in \eqref{eq:porel_term}, \eqref{eq:el_term}, and \eqref{eq:Ahp}, respectively. In \eqref{eq:bilinear_forms}, the bilinear form $\mathcal{A}^a$ is defined such that $\mathcal{A}^a(\varphi,\psi)=(\rho_a\nabla \varphi,\nabla \psi)_{\Omega_a}$ for all $\varphi,\psi\in H^1_0(\Omega_a)$ and $\langle\cdot,\cdot\rangle_{\Gamma_I}$ denotes the $H^{\frac12}(\Gamma_I)$-$H^{-\frac12}(\Gamma_I)$ duality product.

\subsection{Semi-discrete formulation}

We decompose the polytopic regular mesh $\mathcal{T}_h$ as $\mathcal{T}_h=\mathcal{T}^p_h\cup\mathcal{T}^a_h$, where $\mathcal{T}_h^a$ and $\mathcal{T}_h^p$ are aligned with $\Omega_a$ and $\Omega_p$, respectively. In a similar way, we decompose $\mathcal{F}_h$ as $\mathcal{F}_h=\mathcal{F}_{h}^I \cup \mathcal{F}_h^p \cup \mathcal{F}_h^a$, where
$ \mathcal{F}_{h}^I=\{F\in\mathcal{F}_h:F\subset\partial \kappa^p\cap\partial \kappa^a,\kappa^p\in\mathcal{T}_{h}^p,\kappa^a\in\mathcal{T}_{h}^a\}$, and 
$\mathcal{F}_h^p$ and $\mathcal{F}_h^a$ denote the faces of $\mathcal{T}_h^p$ and $\mathcal{T}_h^a$, respectively, not laying on $\Gamma_I$.
The discrete spaces are selected as follows: given element-wise constant polynomial degrees $p_h:\mathcal{T}_h^p\to\mathbb{N}^*$ and $r_h:\mathcal{T}_h^a\to\mathbb{N}^*$, we let $\bm{V}_h^p=[\mathcal{P}_{p_h}(\mathcal{T}_h^p)]^d$ and $V_h^a=\mathcal{P}_{r_h}(\mathcal{T}_h^a)$. 
Finally, we also assume that the coefficients $\rho_a$ and $c$ are piecewise constant over $\mathcal{T}_h^a$ and $\tau$ is piecewise constant over $\mathcal{F}_h^I$. Under this assumption, we can decompose the set of mesh faces belonging to $\Gamma_I$ as $\mathcal{F}_h^I = \mathcal{F}_h^{Is}\cup\mathcal{F}_h^{Io}$, with $\mathcal{F}_h^{Is}=\{F\in\mathcal{F}_h^I\,\mid\, F\subset\Gamma_I^s\}$ and $\mathcal{F}_h^{Io}=\mathcal{F}_h^{I}\setminus\mathcal{F}_h^{Is}$.

The semi-discrete PolydG formulation of problem \eqref{eq:coupled_weakform} consists in finding, for all $t\in(0,T]$, the discrete solution $(\bm{u}_h,\bm{w}_h,\varphi_h)(t)\in \bm{V}_h^p\times \bm{V}_h^p\times V_h^a$ such that
\begin{multline}\label{eq::dgsystem}
\hspace{-3mm}
\partial_{tt}\mathcal{M}((\bm{u}_h,\bm{w}_h,\varphi_h), (\bm{v}_h,\bm{z}_h,\psi_h)) +
\mathcal{A}_h((\bm{u}_h,\bm{w}_h,\varphi_h),(\bm{v}_h,\bm{z}_h,\psi_h))+\partial_t\mathcal{B} (\bm{w}_h,{\bm z}_h) \\ 
\hspace{-1mm}+\hspace{-0.5mm}\partial_t[\mathcal{C}_h(\varphi_h,\bm{v}_h\hspace{-0.5mm}+\bm{z}_h)
\hspace{-0.5mm}-\mathcal{C}_h(\bm{u}_h\hspace{-0.5mm}+\bm{w}_h,\psi_h)] \hspace{-0.5mm} = \hspace{-0.5mm}(\bm{f},\bm{v}_h)_{\Omega_p} \hspace{-1mm}+ \hspace{-0.5mm}(\bm{g},\bm{z}_h)_{\Omega_p} \hspace{-1mm}+ \hspace{-0.5mm}(h,\psi_h)_{\Omega_a}
\end{multline}
for all discrete functions $(\bm{v}_h,\bm{z}_h,\psi_h)\in \bm{V}_h^p\times \bm{V}_h^p\times V_h^a$.
As initial conditions we take the $L^2$-orthogonal projections onto $(\bm{V}_h^p\times \bm{V}_h^p\times V_h^a)^2$ of the initial data $(\bm{u}_0,\bm{w}_0,\varphi_0,\bm{u}_1, \bm{w}_1,\varphi_1)$.
For all $\bm{u},\bm{v},\bm{w},\bm{z}\in\bm{V}_h^p$ and  $\varphi,\psi \in V_h^a$, the bilinear forms $\mathcal{A}_h$ and $\mathcal{C}_h$ appearing in \eqref{eq::dgsystem} are given by
\begin{align}
\mathcal{A}_h((\bm{u},\bm{v},\varphi),(\bm{v},\bm{z},\psi))  & = \mathcal{A}_h^e(\bm{u},\bm{v})+
\widetilde{\mathcal{A}}_h^p(\beta\bm{u}+\bm w,\beta\bm{v}+\bm{z}) + \mathcal{A}_h^a(\varphi,\psi), \label{eq:bilinear_Ah} \\
\mathcal{C}_h(\varphi,\bm{v}) &= (\rho_a\varphi,\bm{v}\cdot\bm{n}_p)_{\mathcal{F}_{h}^{I}}, \label{eq::bilineardg2}
\end{align}
with $\mathcal{A}_h^e:{\bm V}_h^p\times{\bm V}_h^p\to\mathbb{R}$ defined as in \eqref{el_B_tilde} and
\begin{equation}
\label{eq:DGbilinearforms}
\begin{aligned}
\widetilde{\mathcal{A}}_h^p(\bm{w},\bm{z}) &=
(m\nabla\cdot\bm{w},\nabla\cdot\bm{z})_{\mathcal{T}_h^p}
-(\llbrace m(\nabla\cdot\bm{w})\rrbrace,\llbracket\bm{z}\rrbracket_{\bm n})_{\mathcal{F}_h^{pi}\cup\mathcal{F}_h^{pD}\cup\mathcal{F}_h^{Is}} \\ 
& \hspace{-4.5mm}  
-(\llbracket\bm{w}\rrbracket_{\bm n},\llbrace m(\nabla\cdot\bm{z})\rrbrace)_{\mathcal{F}_h^{pi}\cup\mathcal{F}_h^{pD}\cup\mathcal{F}_h^{Is}}+(\gamma\llbracket\bm{w}\rrbracket_{\bm{n}},\llbracket\bm{z}\rrbracket_{\bm{n}})_{\mathcal{F}_h^{pi}\cup\mathcal{F}_h^{pD}\cup\mathcal{F}_h^{Is}}, \\
\mathcal{A}_h^a(\varphi,\psi) &= 
(\rho_a\nabla\varphi,\nabla\psi)_{\mathcal{T}_h^a}
-(\llbrace\rho_a\nabla\varphi\rrbrace,\llbracket\psi\rrbracket)_{\mathcal{F}_h^{ai}\cup\mathcal{F}_h^{aD}} \\ 
& \qquad \qquad \qquad -(\llbracket\varphi\rrbracket, \llbrace\rho_a\nabla\psi\rrbrace)_{\mathcal{F}_h^{ai}\cup\mathcal{F}_h^{aD}}+(\chi\llbracket\varphi\rrbracket,\llbracket\psi\rrbracket)_{\mathcal{F}_h^{ai}\cup\mathcal{F}_h^{aD}}.
\end{aligned}
\end{equation}
Notice that the bilinear form $\widetilde{\mathcal{A}}_h^p$ is different from $\mathcal{A}_h^p$ defined in \eqref{eq:Ahp}. Indeed, the definition of $\widetilde{\mathcal{A}}_h^p$ in \eqref{eq:DGbilinearforms} also takes into account the essential condition $\bm z\cdot\bm n_p = 0$ on $\Gamma_I^s$ embedded in the definition of the functional space $\bm W_\tau$.
The stabilization function $\chi\in L^\infty(\mathcal{F}_h^a)$ is defined such that
\begin{equation}\label{def:penalty_acoustic}
  \chi = \rho_0 
\begin{cases}
\underset{\kappa\in\{\kappa_1,\kappa_2\} } \max \left((\rho_a)_{\mid\kappa}\, {r}^2_\kappa h_\kappa^{-1}\right), & F \in \mathcal{F}_h^{ai}, \, F \subset \partial \kappa_1 \cap \partial \kappa_2, \\
(\rho_a)_{\mid\kappa} {r}^2_\kappa h_\kappa^{-1}, &  F\in\mathcal{F}_h^{aD}, \, F\subset\partial\kappa\cap \Gamma_{aD},
\end{cases}
\end{equation}
with $\rho_0>0$ being a user-dependent parameter.

Denoting by ($U_h$, $W_h$,$\Phi_h$) the vector of the coefficients of $(\bm{u}_h,\bm{w}_h,\varphi_h)$ in the chosen basis for $\bm V_h^p\times \bm V_h^p \times V_h^a$, the algebraic form of problem \eqref{eq::dgsystem} reads:
\begin{multline}\label{eq::algebraic_coupled}
\left[ \begin{matrix} 
\bm{M}^p_\rho & \bm{M}^p_{\rho_f} & 0 \\
\bm{M}^p_{\rho_f} & \bm{M}^p_{\rho_w} & 0 \\
0 & 0 & \bm{M}^a_{\rho_a c^{-2}}
\end{matrix} \right]  
\left[ \begin{matrix} 
\ddot{U}_h \\
\ddot{W}_h \\ 
\ddot{\Phi}_h
\end{matrix} \right] + 
\left[ \begin{matrix} 
0 & 0 & \bm{C} \\
0 & \bm{B} & \bm{C} \\
-\bm{C} & -\bm{C} & 0
\end{matrix} \right]  
\left[ \begin{matrix} 
\dot{U}_h \\
\dot{W}_h \\ 
\dot{\Phi}_h
\end{matrix} \right] \\
+\left[ \begin{matrix} 
\bm{A}^e + \widetilde{\bm{A}}^p_{\beta^2} &  \widetilde{\bm{A}}^p_{\beta} & 0 \\
\widetilde{\bm{A}}^p_{\beta} & \widetilde{\bm{A}}^p & 0 \\ 
0 & 0 & \bm{A}^a
\end{matrix} \right] 
\left[ \begin{matrix} 
{U}_h \\
{W}_h \\ 
{\Phi}_h
\end{matrix} \right] = 
\left[ \begin{matrix} 
F_h \\
G_h \\ 
H_h
\end{matrix} \right],
\end{multline}
with initial conditions $(U_h,W_h,\Phi_h)(0)=(U_0,W_0,\Phi_0)$ and $(\dot{U}_h,\dot{W}_h,\dot{\Phi}_h)(0)=(U_1,W_1,\Phi_1)$. With the notation introduced in Section \ref{sec:time_int}, problem \eqref{eq::algebraic_coupled} can be rewritten in the form of \eqref{eq:2ndorder_time} by setting $X_h = [U_h, W_h, {\Phi}_h]^{\rm T}$, $S_h =[F_h, G_h, H_h]^{\rm T}$, and 
$$
\bm{M}_h \hspace{-0.5mm}=\hspace{-0.5mm} \left[ \begin{matrix} 
\bm{M}^p_\rho & \bm{M}^p_{\rho_f} & 0 \\
\bm{M}^p_{\rho_f} & \bm{M}^p_{\rho_w} & 0 \\
0 & 0 & \bm{M}^a_{\frac{\rho_a}{c^{2}}}
\end{matrix} \right]\hspace{-1mm}, \;\
\bm{D}_h \hspace{-0.5mm}=\hspace{-0.5mm} \left[ \begin{matrix} 
0 & 0 & \bm{C} \\
0 & \bm{B} & \bm{C} \\
-\bm{C} & -\bm{C} & 0
\end{matrix} \right]\hspace{-1mm}, \;\
\bm{A}_h \hspace{-0.5mm}=\hspace{-0.5mm} \left[ \begin{matrix} 
\bm{A}^e + \widetilde{\bm{A}}^p_{\beta^2} &  \widetilde{\bm{A}}^p_{\beta} & 0 \\
\widetilde{\bm{A}}^p_{\beta} & \widetilde{\bm{A}}^p & 0 \\ 
0 & 0 & \bm{A}^a
\end{matrix} \right]\hspace{-1mm}.
$$

\subsection{Stability and convergence results}

In this section, we present the main stability and convergence results proved in \cite{Antonietti.ea:21}. First, we introduce the energy norm defined such that, for all $(\bm u, \bm w,\varphi)\in C^1([0,T];\bm{V}_h^p\times\bm{V}_h^p\times V_h^a)$, 
\begin{multline}\label{eq:porelac_norm}
    \|(\bm u, \bm w, \varphi)(t)\|^2_{\mathbb{E}} = 
    \|(\bm u, \bm w)(t)\|^2_{\mathcal{E}} +
    \|\rho_a^{\frac12}c^{-1}\partial_t\varphi(t) \|^2_{\Omega_a} 
    + \|\varphi(t)\|_{\textrm{DG},a}^2 \\
    + \|\gamma^{\frac12}\bm w\cdot\bm n\|_{\mathcal{F}_h^{Is}}^2 +
    \int_0^t \|\zeta_\tau^{\frac12}\partial_t(\bm w\cdot\bm n)\|_{\mathcal{F}_h^{Io}}^2\,{\rm d}s,
\end{multline}
with $\|\cdot\|_{\mathcal{E}}$ defined in \eqref{eq:porel_norm} and $\|\cdot\|_{\textrm{DG},a}:\bm V_h^a\to\mathbb{R}^+$ given by
\begin{equation}\label{acu_dg_norm}
\| \varphi \|_{\textrm{DG},a}^2  =  
\norm{\rho_a^{\frac12} \nabla\varphi}_{\mathcal{T}_h^a}^2
+\norm{\chi^{\frac{1}{2}} \,\llbracket\varphi\rrbracket }_{\mathcal{F}_h^{aI} \cup \mathcal{F}_h^{aD}}^2 \qquad\forall \varphi \in \bm V_h^a \oplus H^1_0(\Omega_a).
\end{equation}

The stability of the semi-discrete PolydG problem \eqref{eq::dgsystem} is a consequence of Proposition \ref{prop:stab_coupled} below, which also implies that the formulation is dissipative. Indeed, in the case of null external source terms, it follows from estimate \eqref{eq:stab_porelac} that $\norm{(\bm u_h,\bm w_h,\varphi_h)(t)}_{\mathbb{E}}\lesssim \norm{(\bm u_h,\bm w_h,\varphi_h)(0)}_{\mathbb{E}}$ for any $t>0$.
The proof of the following result is based on taking $(\bm{v}_h,\bm{z}_h,\psi_h)=(\partial_t{\bm{u}}_h,\partial_t{\bm{w}}_h,\partial_t{\varphi}_h) \in \bm{V}_h^p\times \bm{V}_h^p\times V_h^a$ in \eqref{eq::dgsystem}, using the skew-symmetry of the coupling terms, and then reasoning as in Proposition \ref{prop:stability_porel} (see \cite[Theorem 3.4]{Antonietti.ea:21} for the details). 
\begin{prop}\label{prop:stab_coupled}
For sufficiently large penalty parameters $\sigma_0, m_0, \rho_0$ and for any $t\in(0,T]$, the solution $(\bm u_h,\bm w_h,\varphi_h)(t)\in \bm{V}_h^p\times \bm{V}_h^p\times V_h^a$ of \eqref{eq::dgsystem} satisfies
\begin{equation}\label{eq:stab_porelac}
\norm{(\bm u_h,\bm w_h,\varphi_h)(t)}_{\mathbb{E}}\lesssim\norm{(\bm u_h,\bm w_h,\varphi_h)(0)}_{\mathbb{E}} 
+\int_0^t\norm{\bm f(s)}_{\Omega_p}^2 +\norm{\bm g(s)}_{\Omega_p}^2 +\norm{h(s)}_{\Omega_a}^2 \rm{d}s,
\end{equation}
with hidden constant depending on time $t$ and on the material properties, but independent of the interface parameter $\tau$.
\end{prop}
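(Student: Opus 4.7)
The plan is to generalize the energy argument used in Proposition \ref{prop:stability_porel}, now applied to the enlarged system that also includes the acoustic variable $\varphi_h$ and the poro-acoustic interface coupling. I would substitute $(\bm v_h, \bm z_h, \psi_h) = (\partial_t \bm u_h, \partial_t \bm w_h, \partial_t \varphi_h)$ in \eqref{eq::dgsystem}. By symmetry of $\mathcal{M}$, $\mathcal{A}_h^e$, $\widetilde{\mathcal{A}}_h^p$, and $\mathcal{A}_h^a$, each of these contributions becomes $\tfrac12 \tfrac{d}{dt}$ of the associated quadratic form, while $\partial_t \mathcal{B}(\bm w_h,\partial_t\bm w_h)$ produces the non-negative dissipation $\|\eta^{1/2}k^{-1/2}\partial_t \bm w_h\|_{\Omega_p}^2 + \|\zeta_\tau^{1/2}\partial_t(\bm w_h\cdot \bm n_p)\|_{\mathcal{F}_h^{Io}}^2$. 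The crucial point is the announced skew-symmetry of the coupling: with this choice of test functions, both $\mathcal{C}_h(\partial_t\varphi_h, \partial_t(\bm u_h+\bm w_h))$ and $\mathcal{C}_h(\partial_t(\bm u_h+\bm w_h), \partial_t\varphi_h)$ reduce via \eqref{eq::bilineardg2} to the single boundary integral $(\rho_a\, \partial_t\varphi_h, \partial_t(\bm u_h+\bm w_h)\cdot\bm n_p)_{\mathcal{F}_h^I}$, so their difference in \eqref{eq::dgsystem} vanishes and the interface coupling drops out of the energy identity.

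Integrating in time from $0$ to $t$ and invoking the positive-definiteness of $\mathcal{M}^p$ as in \eqref{eq:positive_M}, the positivity of the acoustic inertial form $(\rho_a c^{-2}\cdot,\cdot)_{\Omega_a}$, and the coercivity of $\mathcal{A}_h^e$, $\widetilde{\mathcal{A}}_h^p$, $\mathcal{A}_h^a$ (all of which hold for sufficiently large $\sigma_0, m_0, \rho_0$, analogously to \eqref{eq:coercive_Ah}), I would obtain an estimate of the shape $\|(\bm u_h,\bm w_h,\varphi_h)(t)\|_{\mathbb{E}}^2 \lesssim \|(\bm u_h,\bm w_h,\varphi_h)(0)\|_{\mathbb{E}}^2 + 2\int_0^t[(\bm f,\partial_t\bm u_h)_{\Omega_p}+(\bm g,\partial_t\bm w_h)_{\Omega_p}+(h,\partial_t\varphi_h)_{\Omega_a}]\, {\rm d}s$. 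The right-hand side is then controlled by Cauchy--Schwarz followed by Young's inequality: the $\bm g$ term is absorbed into the dissipation $\|\eta^{1/2}k^{-1/2}\partial_t\bm w_h\|_{\Omega_p}^2$ already available on the left, while for the $\bm f$ and $h$ terms I would apply the splitting used in the proof of Proposition \ref{prop:stability_porel} (Cauchy--Schwarz followed by a $1/t$ vs. $t$ weighting), converting the kinetic energy factor into a term that is absorbed after passing to $\max_{t\in[0,T]}$.

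The main obstacle I expect is the correct bookkeeping of the interface contributions. Concretely, one must verify that (i) the dissipation $\|\zeta_\tau^{1/2}\partial_t(\bm w_h\cdot\bm n_p)\|_{\mathcal{F}_h^{Io}}^2$ appearing in the energy norm $\|\cdot\|_{\mathbb{E}}$ is exactly the time-integrated contribution of the interface part of $\partial_t\mathcal{B}$, and (ii) the static jump penalty $\|\gamma^{1/2}\bm w_h\cdot\bm n\|_{\mathcal{F}_h^{Is}}^2$ in the norm is correctly produced by the coercivity of $\widetilde{\mathcal{A}}_h^p$ on the sealed-pore faces, since $\widetilde{\mathcal{A}}_h^p$, unlike $\mathcal{A}_h^p$ in \eqref{eq:Ahp}, sums also over $\mathcal{F}_h^{Is}$ to account for the essential condition $\bm z\cdot\bm n_p = 0$ embedded in the definition of $\bm W_\tau$. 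Provided this splitting between open and sealed interface faces is tracked faithfully, the rest of the argument is a direct extension of the poro-elastic case, and the hidden constant is seen to depend on $T$ and the material coefficients but not on $\tau$ itself, as claimed.
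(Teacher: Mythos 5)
Your proposal is correct and follows essentially the same route as the paper, which itself only sketches the argument: test with $(\partial_t\bm u_h,\partial_t\bm w_h,\partial_t\varphi_h)$, exploit the skew-symmetry of the coupling terms $\mathcal{C}_h$ so the interface contributions cancel, and then reason exactly as in Proposition \ref{prop:stability_porel} (coercivity of the bilinear forms, positivity of $\mathcal{M}$, Cauchy--Schwarz and Young on the sources with the $1/t$ vs.\ $t$ weighting). Your additional bookkeeping remarks on the $\mathcal{F}_h^{Is}$ and $\mathcal{F}_h^{Io}$ interface contributions match how the energy norm \eqref{eq:porelac_norm} is assembled, so no further comment is needed.
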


In what follows, we report the main result concerning the error analysis of the PolydG discretization \eqref{eq::dgsystem}. To infer the error estimate of Theorem \ref{thm:poelac_error} below, an additional assumption on the interface permeability $\tau$ is required.
\begin{assumption}\label{ass::tau_bnd}
For each $F\in\mathcal{F}_h^{Io}$ and $\kappa\in\mathcal{T}_h^p$ such that $F\subset\partial\kappa\cap\Gamma_I^o$, it holds $(\zeta_\tau)_{\mid F}=(\frac{1-\tau}\tau)_{\mid F} \lesssim \frac{p_{p,\kappa}^2}{h_{\kappa}}$, with hidden constant independent of $\tau$.
\end{assumption}
\noindent
We remark that the previous assumption is used only for establishing the error estimate below but, according to our observation, it is not needed in practical applications.
We refer the reader to \cite[Theorem 4.3]{Antonietti.ea:21} for the detailed proof of the following result.
\begin{theorem} \label{thm:poelac_error}
Let Assumption \ref{ass::regular}, Assumption \ref{ass::3}, and Assumption \ref{ass::tau_bnd} be satisfied and assume that the solution $(\bm u,\bm w,\varphi)$ of the weak formulation \eqref{eq:porel_weak} is sufficiently regular. For any time $t \in [0,T]$, let $(\bm u_h,\bm w_h,\varphi_h)(t)\in \bm{V}_h^p\times \bm{V}_h^p\times V_h^a$ be the PolydG solution of problem~\eqref{eq::dgsystem} obtained with sufficiently large penalization parameters $\sigma_0,m_0$ and $\rho_0$. Then, for any time $t \in (0,T]$,  the discretization error $\bm E(t)=(\bm u - \bm u_h, \bm w - \bm w_h, \varphi-\varphi_h)(t)$ satisfies
\begin{equation}\label{eq:poelac_error-estimate}
\begin{aligned}
\| \bm E (t)\|_{\mathbb{E}} 
&\lesssim \sum_{\kappa \in \mathcal{T}_h^p} \frac{h_\kappa^{s_\kappa-1}}{p_\kappa^{m_\kappa-3/2}} \left(
\mathcal{I}_{m_\kappa}^{\mathcal{T}_\sharp}(\bm u,\bm w)(t) + \int_{0}^{t} \mathcal{I}_{m_\kappa}^{\mathcal{T}_\sharp}(\partial_t{\bm u}, \partial_t\bm w)(s) \, ds\right) \\
&+ \sum_{\kappa \in \mathcal{T}_h^a} \frac{h_\kappa^{q_\kappa-1}}{r_\kappa^{l_\kappa-3/2}} \left(
\mathcal{I}_{l_\kappa}^{\mathcal{T}_\sharp}(\varphi)(t) + \int_{0}^{t} \mathcal{I}_{l_\kappa}^{\mathcal{T}_\sharp}(\partial_t\varphi)(s) \, ds\right),
\end{aligned}
\end{equation}
where
\begin{equation*}
  \begin{aligned}
    \mathcal{I}_{m_\kappa}^{\mathcal{T}_\sharp}(\bm u, \bm w) &= 
    \| \widetilde{\mathcal{E}} \bm u \|_{\bm H^{m_\kappa}(\mathcal{T}_\sharp)} \hspace{-0.6mm}+ 
    \| \widetilde{\mathcal{E}} \bm w \|_{\bm H^{m_\kappa}(\mathcal{T}_\sharp)} \hspace{-0.6mm}+
    \| \widetilde{\mathcal{E}} \partial_t{\bm u} \|_{\bm H^{m_\kappa}(\mathcal{T}_\sharp)} \hspace{-0.6mm}+
    \| \widetilde{\mathcal{E}} \partial_t{\bm w} \|_{\bm H^{m_\kappa}(\mathcal{T}_\sharp)},\\
    \mathcal{I}_{l_\kappa}^{\mathcal{T}_\sharp}(\varphi) &= 
    \| \widetilde{\mathcal{E}} \varphi \|_{ H^{l_\kappa}(\mathcal{T}_\sharp)} + 
    \| \widetilde{\mathcal{E}} \partial_t{\varphi} \|_{ H^{l_\kappa}(\mathcal{T}_\sharp)},
  \end{aligned}
\end{equation*}
with $s_\kappa = \min(p_\kappa +1, m_\kappa)$ and $q_\kappa = \min(r_\kappa +1, l_\kappa)$ for all $\kappa \in \mathcal{T}_h$. The hidden constant depends on time $t$, the material properties, and the shape-regularity of the covering $\mathcal{T}_\sharp$, but is independent of the discretization parameters and of $\tau$.
\end{theorem}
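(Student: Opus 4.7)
The plan is to combine a standard DG error decomposition with the stability argument of Proposition~\ref{prop:stab_coupled}, together with $hp$-interpolation estimates on polytopal meshes obtained via the Stein extension operator $\widetilde{\mathcal{E}}$ and the covering $\mathcal{T}_\sharp$ introduced in Section~\ref{sec:DGnotation}. More precisely, I would fix PolydG interpolants $\bm u_I,\bm w_I \in \bm V_h^p$ and $\varphi_I \in V_h^a$ of $\bm u$, $\bm w$ and $\varphi$ (e.g.\ the $L^2$-orthogonal projection, or the $hp$-quasi-interpolant from \cite{CangianiDongGeorgoulisHouston_2017}) and split
\begin{equation*}
\bm E = \underbrace{(\bm u - \bm u_I,\bm w - \bm w_I,\varphi - \varphi_I)}_{=:\bm E_I}
+ \underbrace{(\bm u_I - \bm u_h,\bm w_I - \bm w_h,\varphi_I - \varphi_h)}_{=:\bm E_h}.
\end{equation*}
For $\bm E_I$, the hp-approximation bounds on $\mathcal{T}_\sharp$ (together with Assumption~\ref{ass::3} and the trace-inverse inequality~\eqref{eq::traceinv}) directly yield the $h_\kappa^{s_\kappa-1}/p_\kappa^{m_\kappa-3/2}$-type bound in the energy norm~\eqref{eq:porelac_norm}, including the interface contributions $\|\gamma^{1/2}\llbracket\cdot\rrbracket_{\bm n}\|_{\mathcal{F}_h^{Is}}$ and $\int_0^t\|\zeta_\tau^{1/2}\partial_t(\cdot\cdot\bm n)\|_{\mathcal{F}_h^{Io}}^2$, the latter thanks to Assumption~\ref{ass::tau_bnd} which converts the $\zeta_\tau$-weight into an admissible $p^2/h$ weight.

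For $\bm E_h$, I would exploit the strong consistency of the PolydG formulation~\eqref{eq::dgsystem}: since $(\bm u,\bm w,\varphi)$ solves the weak problem~\eqref{eq:coupled_weakform} and the PolydG bilinear forms $\mathcal{A}_h,\mathcal{C}_h$ reduce to their continuous counterparts on sufficiently smooth fields, the Galerkin orthogonality
\begin{equation*}
\partial_{tt}\mathcal{M}(\bm E,\cdot)+\mathcal{A}_h(\bm E,\cdot)+\partial_t\mathcal{B}(\bm E_{\bm w},\cdot)+\partial_t\mathcal{C}_h(\ldots)=0
\end{equation*}
holds for every discrete test. Writing this error equation for $\bm E_h$ and moving the $\bm E_I$ contributions to the right-hand side produces a perturbed version of the equation solved by $(\bm u_h,\bm w_h,\varphi_h)$, with forcing terms expressed in terms of $\bm E_I$ and its time derivative. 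I would then test with $(\partial_t\bm E_h^u,\partial_t\bm E_h^w,\partial_t\bm E_h^\varphi)$, exploit the skew-symmetry of the coupling term $\mathcal{C}_h$ (which cancels as in the proof of Proposition~\ref{prop:stab_coupled}), and use coercivity of $\mathcal{A}_h^e,\widetilde{\mathcal{A}}_h^p,\mathcal{A}_h^a$ for $\sigma_0,m_0,\rho_0$ large enough, reproducing the basic energy identity behind~\eqref{eq:stab_porelac}.

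The residual terms involving $\bm E_I$ are then controlled via continuity bounds for $\mathcal{M},\mathcal{A}_h,\mathcal{B},\mathcal{C}_h$ on $\bm V_h^p\oplus\bm H^1_0(\Omega_p)$, $\bm W_\tau$ and $V_h^a\oplus H^1_0(\Omega_a)$. These continuity estimates are the standard PolydG ones (cf.\ \cite{Antonietti.ea:21}), and their application generates DG-seminorms of $\bm E_I$ plus $L^2$-type norms of $\partial_t\bm E_I$, both of which are controlled by the same $hp$-interpolation bound used above. After a Cauchy--Schwarz/Young splitting, time integration, and absorption of the $\partial_t\bm E_h$ contributions into the energy, one obtains
\begin{equation*}
\|\bm E_h(t)\|_{\mathbb{E}}^2 \lesssim \|\bm E_h(0)\|_{\mathbb{E}}^2 + \int_0^t \Bigl(\text{interpolation terms}\Bigr)(s)\,ds,
\end{equation*}
and Gr\"onwall's lemma (needed to handle lower-order couplings arising from $\mathcal{C}_h$ and $\mathcal{B}$) gives the desired bound on $\bm E_h$. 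The choice of the $L^2$-projection for the initial data ensures that $\|\bm E_h(0)\|_{\mathbb{E}}$ is itself of optimal order. A final triangle inequality combining the bounds on $\bm E_I$ and $\bm E_h$ produces~\eqref{eq:poelac_error-estimate}.

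The main technical obstacle is the careful treatment of the interface: the coupling term $\mathcal{C}_h(\cdot,\cdot)_{\mathcal{F}_h^I}$, the essential condition $\bm w\cdot\bm n_p=0$ on $\Gamma_I^s$ that is built into $\widetilde{\mathcal{A}}_h^p$, and the weighted term $\|\zeta_\tau^{1/2}\partial_t(\bm w\cdot\bm n)\|_{\mathcal{F}_h^{Io}}$ must all be bounded uniformly in $\tau$. This is precisely where Assumption~\ref{ass::tau_bnd} enters, allowing the face integrals on $\mathcal{F}_h^{Io}$ to be reabsorbed via~\eqref{eq::traceinv}. A secondary subtlety is the joint handling of the time derivatives: since the estimate contains $\int_0^t\mathcal{I}_{m_\kappa}^{\mathcal{T}_\sharp}(\partial_t \bm u,\partial_t \bm w)$ and the analogous acoustic term, one needs an interpolation operator commuting (up to admissible error) with $\partial_t$, which is standard for polynomial projections on a time-independent mesh.
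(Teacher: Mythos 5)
Your proposal is correct and follows essentially the same route as the paper's proof (which is deferred to \cite[Theorem 4.3]{Antonietti.ea:21}): an interpolation/discrete error splitting via the Stein extension and $hp$-quasi-interpolants on the covering $\mathcal{T}_\sharp$, an error equation tested with the time derivative of the discrete error, skew-symmetry of $\mathcal{C}_h$ and coercivity of $\mathcal{A}_h^e,\widetilde{\mathcal{A}}_h^p,\mathcal{A}_h^a$ as in Proposition \ref{prop:stab_coupled}, Assumption \ref{ass::tau_bnd} to absorb the $\zeta_\tau$-weighted interface terms, and a Gr\"onwall argument followed by the triangle inequality. No gaps to report.
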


\subsection{Verification test}\label{ver_test_poroac}

\begin{figure}
    \centering
    \includegraphics[width=0.5\textwidth]{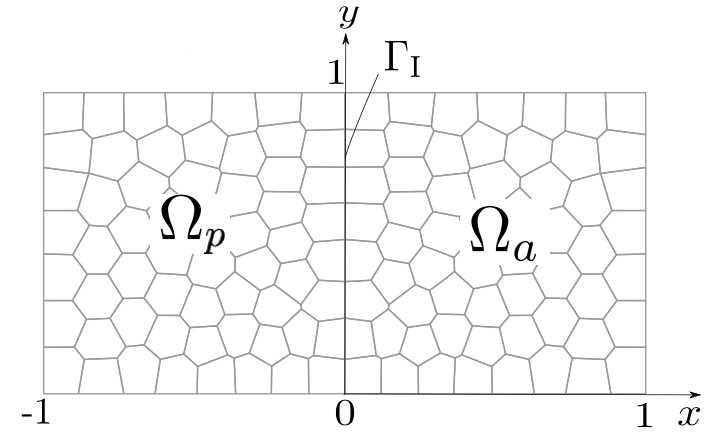}
    \caption{Test case of Section~\ref{ver_test_poroac}. Polygonal mesh with $N_{el} =100$ elements.}
    \label{fig::mesh}
\end{figure}
As a verification test case, we study the poro-elasto-acoustic problem coupling \eqref{eq:model_porel} and \eqref{eq::acousticeq} with the interface conditions \eqref{eq:interface} in the domain $\Omega=\Omega_p\cup\Omega_a=(-1,1)\times(0,1)$.  
We consider a sequence of polygonal meshes as the one shown in Figure \ref{fig::mesh}, the physical parameters listed in Figure \ref{poroelastic-mesh-param} (right) and  $c=\rho_a = 1$.  As exact solution we consider 
\eqref{solution_poroelastic} in $\Omega_p$ and 
\begin{equation*}
\varphi(x,y;t)=
 x^2 \sin(\pi x)\sin(\pi y) \sin(\sqrt{2}\pi t),    
\end{equation*}
in $\Omega_a$ in order to have a null pressure in the whole poroelastic domain. Dirichlet and initial conditions are set accordingly. We remark that with this choice the interface coupling conditions are null on $\Gamma_I$. For the following test cases we consider $\tau = 1$ (open pores) at the interface, however similar results can be obtained with $\tau \in [0,1)$, cf. \cite{Antonietti.ea:21}.
We fix the $T=0.25$ and consider a time step $\Delta t=10^{-4}$ for the Newmark-$\beta$ scheme,  $\gamma_N=1/2$ and $\beta_N=1/4$, cf. \eqref{eq::newmark_taylor}.
Penalty parameters $\sigma_0$ and $m_0$ in $\Omega_p$ as well as $\rho_0 \in \Omega_a$ are set equal to 10, cf. \eqref{el_penalization_parameter}, \eqref{pen_m_parameter}  and \eqref{def:penalty_acoustic}, respectively.   

\begin{figure}
\begin{minipage}{0.6\textwidth}
    \begin{tabular}{|c|c|c|c|}
    \hline 
    h \textbackslash \, p  &  2 & 3 & 4 \\
    \hline
     0.35   & 5.5007e-1 &  4.4095e-3 &  7.6593e-3\\
     0.25   & 2.0885e-1 &  1-6164e-3 &  1.7884e-3 \\
     0.18   & 1.3225e-1 &  6.5993e-4 &  5.1521e-4 \\
     0.13   & 6.9271e-2 &  2.5537e-4 &  1.2756e-4 \\
     \hline 
     rate & 1.98 & 2.91 & 4.29 \\
     \hline
    \end{tabular} 
\end{minipage}
\hspace{2mm}
\begin{minipage}{0.4\textwidth}
\includegraphics[width=0.9\textwidth]{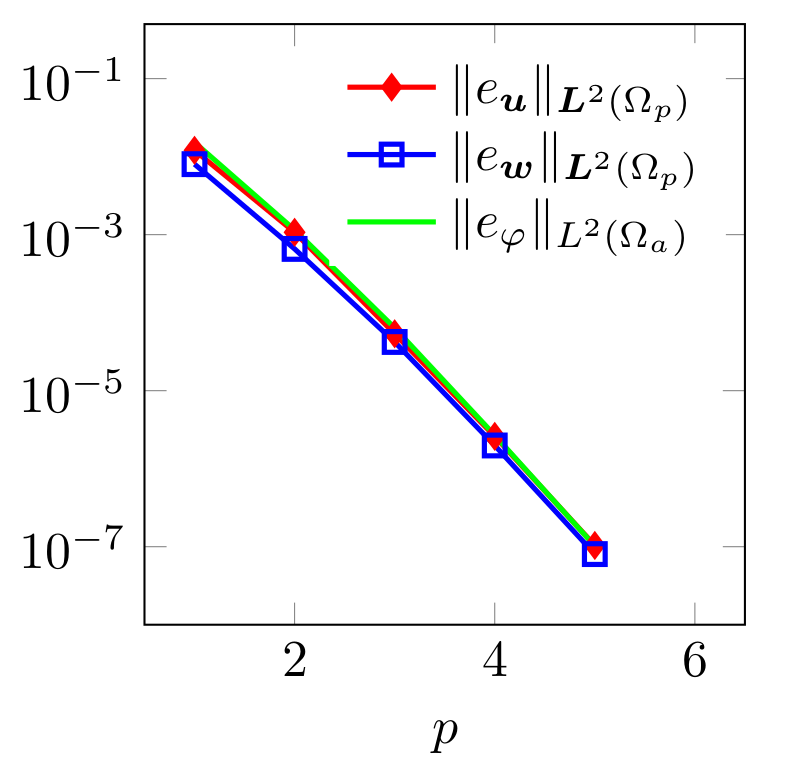}
\end{minipage}
    \caption{Test case of Section~\ref{ver_test_poroac}. Computed energy error as a function of the mesh size $h$ for polynomial degree $p=2,3,4$. The rate of convergence is reported in the last row, cf. \eqref{eq:poelac_error-estimate} (left). Computed $\bm L^2$-errors  $\|e_{\bm u}\|_{\bm L^2(\Omega_p)} = \|\bm u-\bm u_h\|_{\bm L^2(\Omega_p)}$, $\|e_{\bm w}\|_{\bm L^2(\Omega_p)} = \|\bm w-\bm w_h\|_{\bm L^2(\Omega_p)}$ and $\|e_{\varphi}\|_{L^2(\Omega_a)} = \|\varphi-\varphi_h\|_{ L^2(\Omega_a)}$ as a function of the polynomial degree $p$ in a semilogarithmic scale with fixed the  number of polygonal elements as $N_{el} = 100$ (right).}
    \label{fig:convergence_poro_acoustic}
\end{figure}

Finally, in Figure \ref{fig:convergence_poro_acoustic} (left) we report the computed energy errors as a function of the  the mesh-size $h$, for the $p=2,3,4$. Consistently with \eqref{eq:poelac_error-estimate} the errors decays proportionally to $h^{p}$.
In Figure \ref{fig:convergence_poro_acoustic} (right) we plot in a semilog-scale the computed $L^2$-norms of the error fixing a computational mesh of $N_{el}=100$ polygons and varying the polynomial degree $p=1,2,\ldots, 5$. An exponential decay of the error is clearly attained.


\section{Examples of physical interest}\label{sec:NumResults}

\subsection{Two layered media}\label{layered_media}
In this section we consider a wave propagation problem in heterogeneous media taken from \cite{Tromp_Morency_2008}. The aim of this test is to show how different assumptions on the model can determine and change the behavior of the wave propagation. 

The domain of interest is $\Omega = (0,4.8)^2~$ $\rm{km}^2$ and consists of two layers as depicted in Figure~\ref{fig:layerd_media}. In the first case (a) the layers are perfectly elastic, cf. Table~\ref{tab::table_elastic}, while in the second case (b) the layers are assumed to be poro-elastic, cf. Table~\ref{tab::table_poroelastic}. A point-wise source $\bm f$, cf. \eqref{point-force}, acting in the $y-$ direction is located in the upper part of the domain at point $\bm x = (2.4,2.7)~\rm{km}$. The  time evolution of the latter is given by a Ricker-wavelet \eqref{eq:ricker}  with amplitude $A_0=1$~m, time-shift $t_0=0.3$~s and peak-frequency $f_p=5$ Hz. For both models (a) and (b) we use a polygonal mesh with characteristic size $h=10^{-2}$ and a polynomial degree $p=3$. We set homogeneous Dirichlet conditions on the boundary and use null initial conditions. To integrate in time model (a) we chose the leap-frog scheme while for model (b) the Newmark-$\beta$ scheme with parameters $\beta_N$ and $\gamma_N$ as in the previous section.  We fix the final time $T=1$ s and chose $\Delta t = 10^{-3}$ s.    
\begin{figure}
    \centering
    \includegraphics[width=0.5\textwidth]{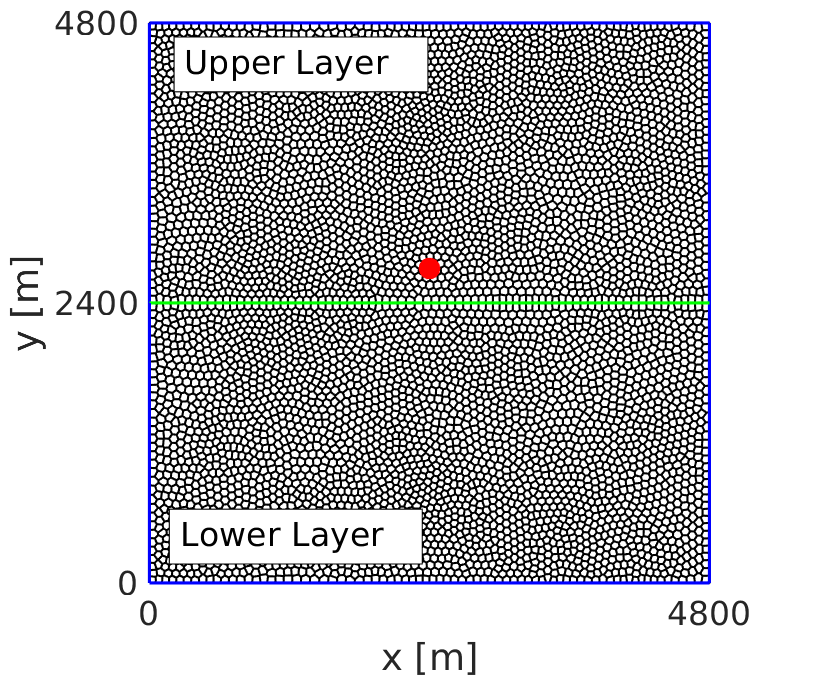}
    \caption{Test case of Section \ref{layered_media}. Computational domain: the location of the point-source force is superimposed in red. }
    \label{fig:layerd_media}
\end{figure}

\begin{table}[htbp]
\centering
\begin{tabular}{lllllll}
& & & Lower Layer & Upper Layer & & \\
\cline{1-7}
 & Solid density      & $\rho$    & 2650 & 2200                 & $\rm kg/m^3$    &  \\
& Shear modulus      & $\mu$       & 1.5038 $\cdot 10^9$ & 4.3738 $\cdot 10^9$     & $\rm Pa$        &  \\
 & Lam\'e coefficient   & $\lambda$ & 1.8121$\cdot 10^9$ & 7.2073$\cdot 10^9$    & $\rm Pa$        &  \\
 & Damping coefficient   & $\zeta$ & 0 & 0    & $\rm s^{-1}$        &  \\
  \cline{1-7}
\end{tabular}
\caption{Test case of Section \ref{layered_media}. Physical parameters for the elastic medium.}
\label{tab::table_elastic}
\end{table}
\begin{table}[htbp]
\centering
\begin{tabular}{lllllll}
& & & Lower Layer & Upper Layer & & \\
\cline{1-7}
\textbf{Fluid}  & Fluid density      & $\rho_f$    & 750 & 950                 & $\rm kg/m^3$    &  \\
& Dynamic viscosity  & $\eta$      & 0  & 0                   & $\rm Pa\cdot s$ &  \\ \cline{1-7}
\textbf{Grain}  & Solid density      & $\rho_s$    & 2650 & 2200                 & $\rm kg/m^3$    &  \\
& Shear modulus      & $\mu$       & 1.5038 $\cdot 10^9$ & 4.3738 $\cdot 10^9$     & $\rm Pa$        &  \\ \cline{1-7}
\textbf{Matrix} & Porosity           & $\phi$      & 0.2 & 0.4                 &             &  \\
& Tortuosity         & $a$         & 2 & 2                  &             &  \\
& Permeability       & $k$         & $1\cdot 10^{-12}$ & $1\cdot 10^{-12}$ & $\rm m^2$       &  \\
& Lam\'e coefficient   & $\lambda$ & 1.8121$\cdot 10^9$ & 7.2073$\cdot 10^9$    & $\rm Pa$        &  \\
& Biot's coefficient & $m$         & 7.2642$\cdot 10^9$ & 6.8386$\cdot 10^9$   & $\rm Pa$        &  \\
& Biot's coefficient & $\beta$     & 0.9405  & 0.0290                 &             &  \\  \cline{1-7}
\end{tabular}
\caption{Test case of Section \ref{layered_media}. Physical parameters for the poro-elastic medium.}
\label{tab::table_poroelastic}
\end{table}

In Figure~\ref{fig:layerd-model} we report selected snapshots of the computed magnitude of the velocity field $\mid \partial_t \bm{u}_h(t) \mid$ for models (a) and (b). As expected, the propagation of the wave in the elastic domain is regular and refraction phenomena are not very evident (due to a low contrast between the wave speeds). On the contrary, when porous media are accounted for, the refraction effects are more pronounced. This is in agreement with the findings in \cite{Tromp_Morency_2008}.

\begin{figure}[htbp]
\centering
\includegraphics[width=0.3\textwidth]{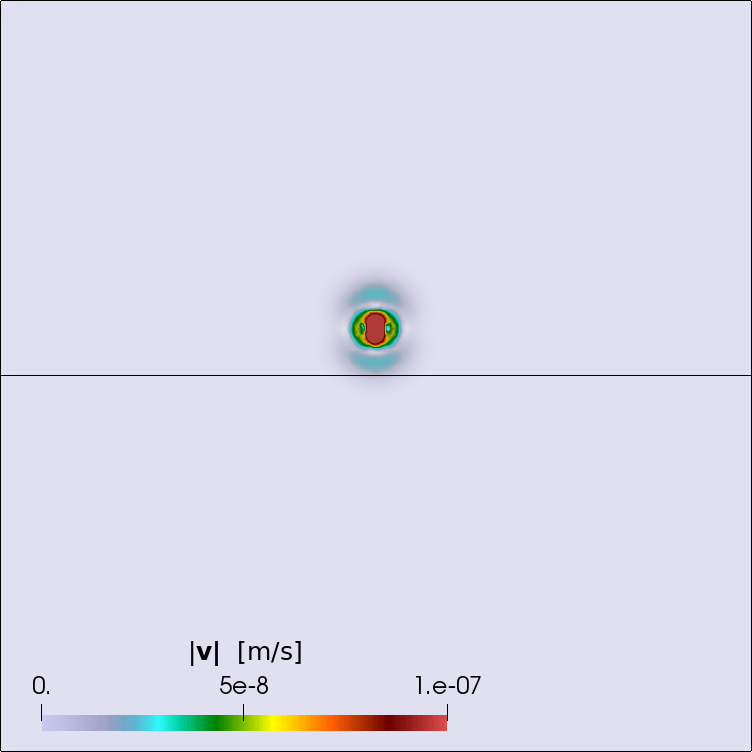}
\includegraphics[width=0.3\textwidth]{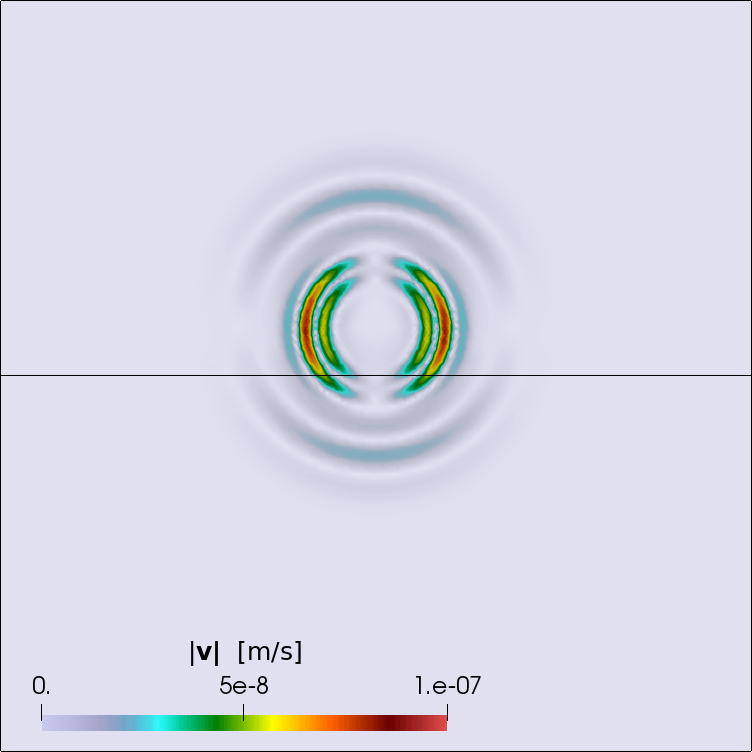}
\includegraphics[width=0.3\textwidth]{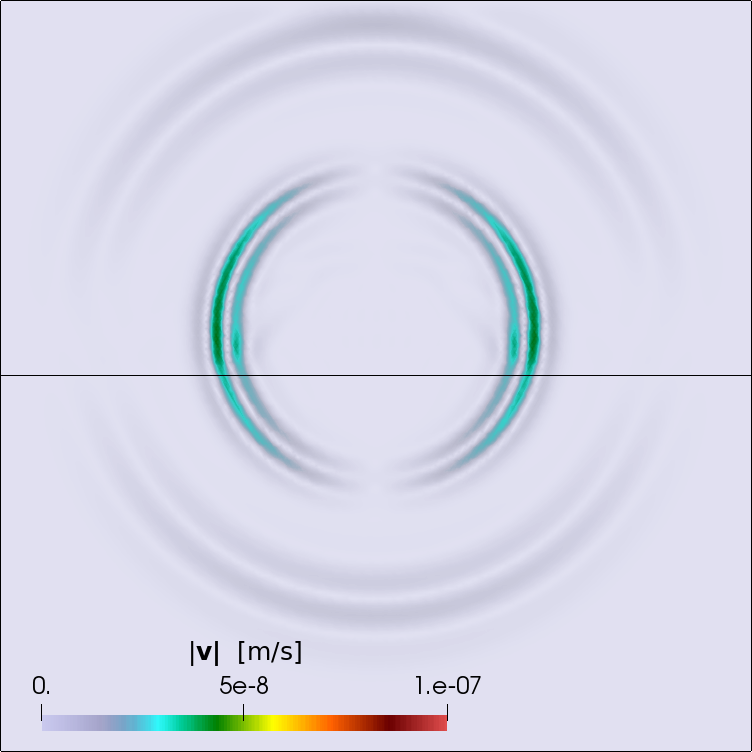}
\includegraphics[width=0.3\textwidth]{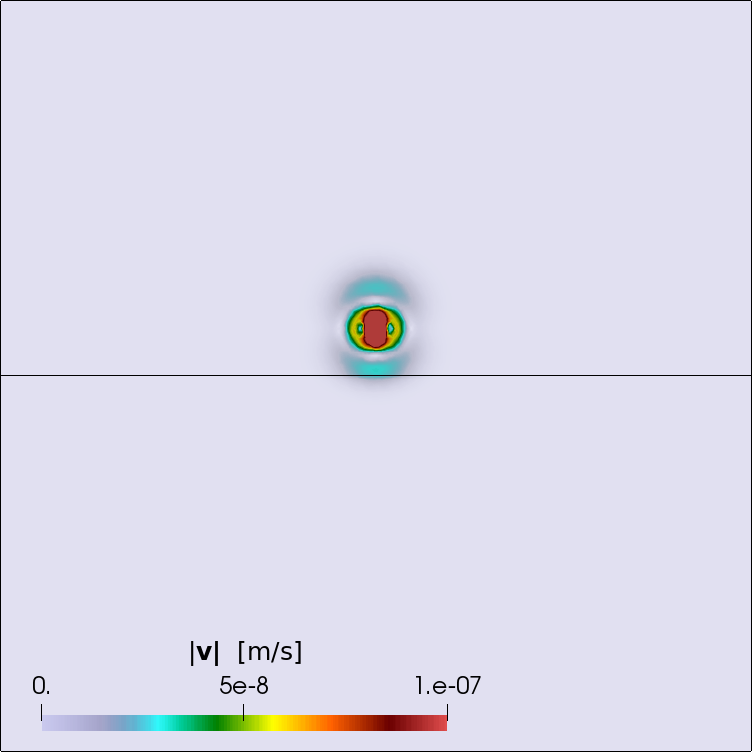}
\includegraphics[width=0.3\textwidth]{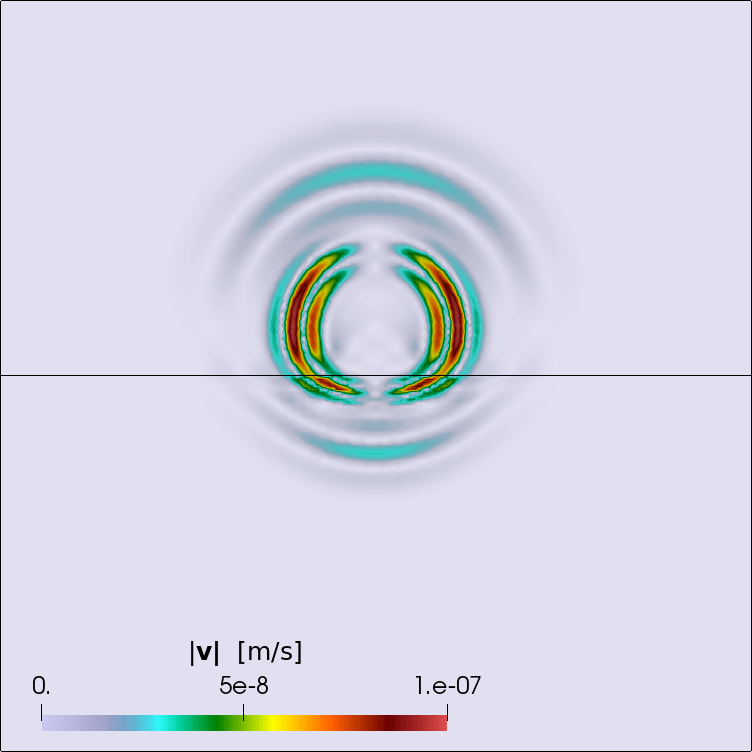}
\includegraphics[width=0.3\textwidth]{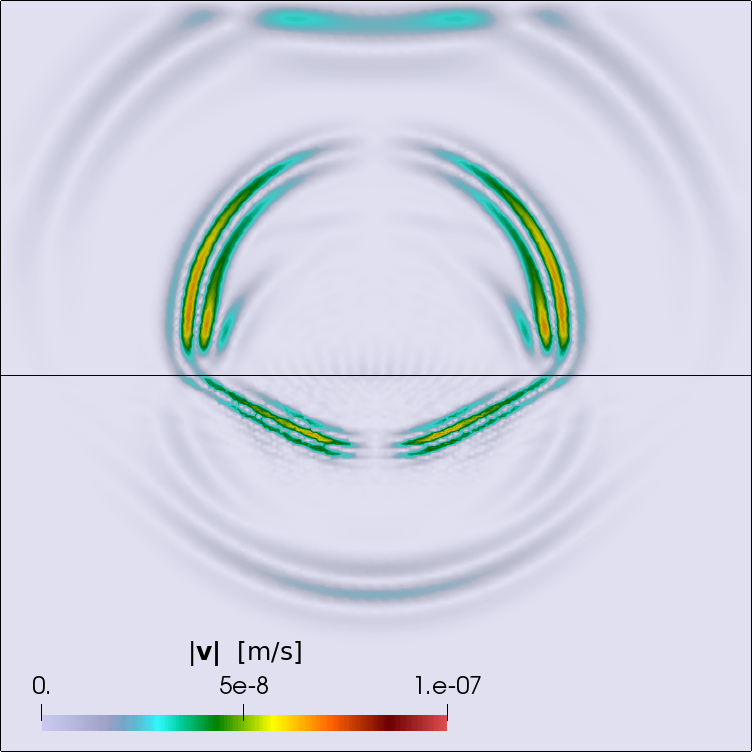}
\caption{Test case of Section \ref{layered_media}.  Computed velocity field $\mid \partial_t\bm{u}_h(t) \mid$ at the time instants $t=0.3$~s (left), $t=0.6$~s (center) and $t=1$~s (right) for elastic model (a) (top) and poro-elastic model (b) (bottom).}
\label{fig:layerd-model}
\end{figure}

\subsection{Wave propagation in layered poro-elastic-acoustic media}\label{layered_poro_ac}

As a final test cases we consider the domain reproduced in Figure~\ref{fig:emilia_mesh}
where an acoustic layer is in contact with a heterogeneous poro-elastic body.
\begin{figure}
    \centering
    \includegraphics[width=1\textwidth]{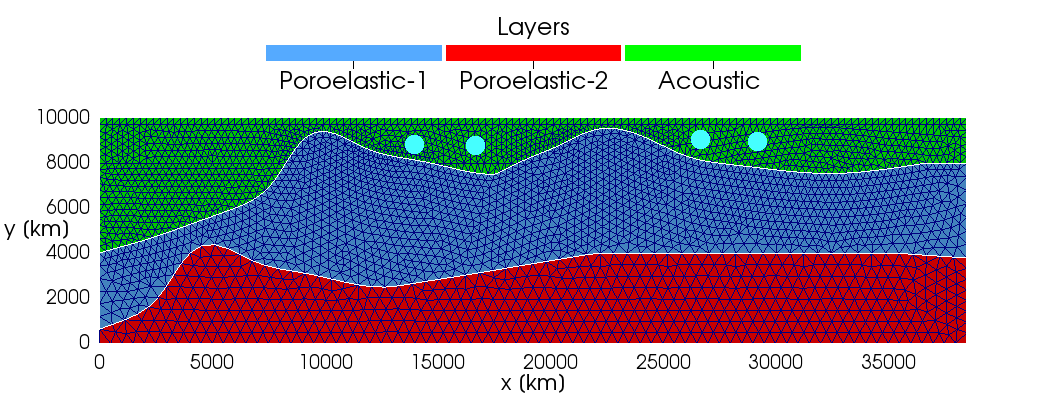}
    \caption{Test case of Section \ref{layered_poro_ac}. Computational domain. Location of the acoustic sources are also superimposed.}
    \label{fig:emilia_mesh}
\end{figure}

For the acoustic domain we set $\rho_a=1500~\rm{[kg/m^3]}$ and $c=1000~\rm{[m/s]}$. Physical parameters for the poro-elastic domain are chosen as in Table~\ref{tab::table_poroelastic} where, for this case, the property of the former ``Lower Layer" are assigned to the first poro-elastic subdomain, while those of the former ``Upper Layer" to the second poro-elastic subdomain, cf. Figure~\ref{fig:emilia_mesh}.
In this numerical example we chose the dynamic viscosity $\eta$ equal to $0.001$.
Boundary and initial conditions have been set equal to zero both for the poroelastic and the acoustic domain. Forcing terms are null in $\Omega_p$, while in $\Omega_a$ we consider a force of the form $h=r(x,y)q(t)$,
where $q$ is a Ricker wavelet of the form \eqref{eq:ricker} with $A_0 = 1~\rm{[Hz~m^3]}$, $\beta_p = 39.4784~\rm{[Hz^2]}$ and $t_0=0.75$~s. The function  $r(x,y)$ is  defined as $ r(x,y)= 1$,  if $(x,y) \in \bigcup_{i=1}^4 B({\bm x}_i,R)$, while $r(x,y)=0$, otherwise, where $B({\bm x}_i,R)$ is the circle centered in ${\bm x}_i$ and with radius $R$. Here, we set ${\bm x}_1 = (13097,8868)$ m, ${\bm x}_2 = (16673,8868)$ m, ${\bm x}_3 = (27079,8868)$ m, ${\bf x}_4 = (29324,8868)$ m and $R=100$ m. 
Notice that, the support of the function $r(x,y)$ has been reported in Figure~\ref{fig:emilia_mesh}, superimposed with a sample of the computational mesh employed.

Simulations have been carried out by considering: a mesh consisting in $N=6356$ triangles, subdivided into $N_a= 2380$ and $N_p=3976$ triangles for the acoustic and poroelastic domain, respectively; a Newmark scheme with time step $\Delta t=10^{-2}\ \rm s$ and $\gamma_N=1/2$ and $\beta_N=1/4$ in a time interval $[0,4]\ \rm s$; a polynomial degree $p_{\kappa}=r_{\kappa} = p = 4$. 
In Figure~\ref{fig:emilia_test}, we show the computed pressure $p_h$ 
considering the interface permeability $\tau=1$.  The latter value models an \textit{open} pores condition at the interface, cf. \eqref{eq:interface}. Remark that $p_h = \rho_a \dot{\varphi}_h$ in the acoustic domain while $p_h =  -m(\beta \nabla\cdot\bm{u}_h+\nabla\cdot \bm{w}_h)$ in the poro-elastic one. As one can see, the pressure wave correctly propagates from the acoustic domain to the poro-elastic one: the continuity at the interface boundary can be appreciated.  Finally, we note how the second porous layer (sound absorbing material) produces a damping of the pressure field.

\begin{figure}
\centering
\includegraphics[width=1\textwidth]{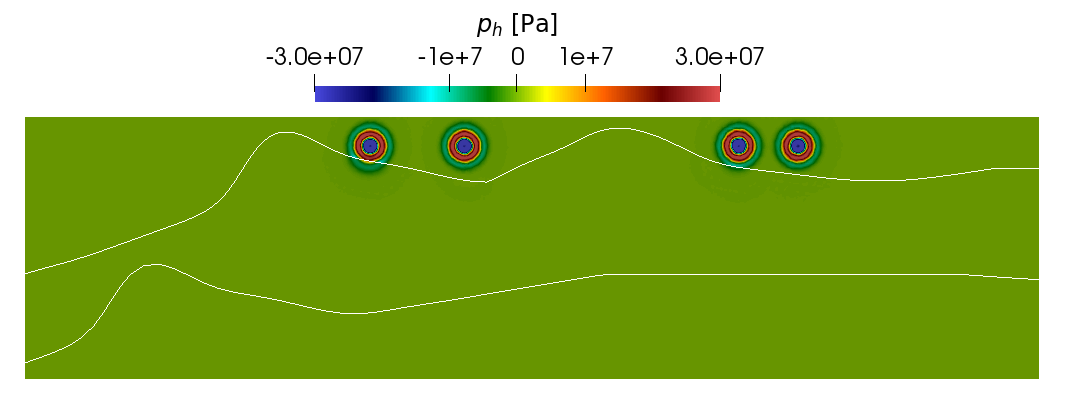}
\includegraphics[width=1\textwidth]{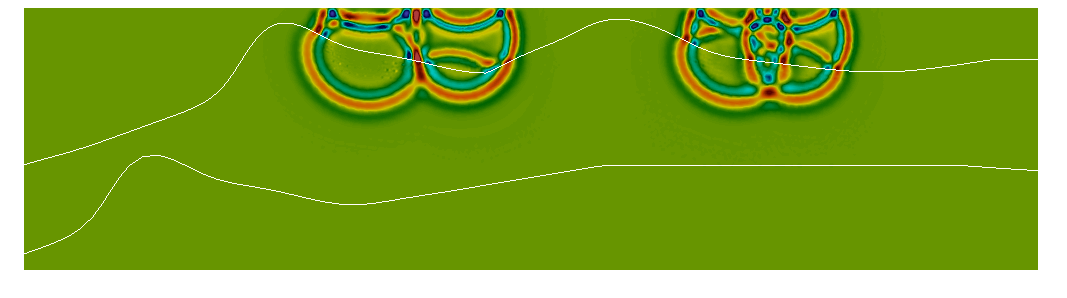}
\includegraphics[width=1\textwidth]{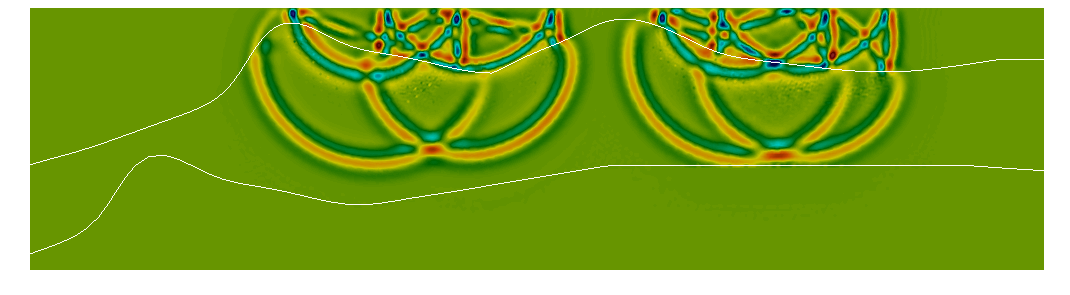}
\includegraphics[width=1\textwidth]{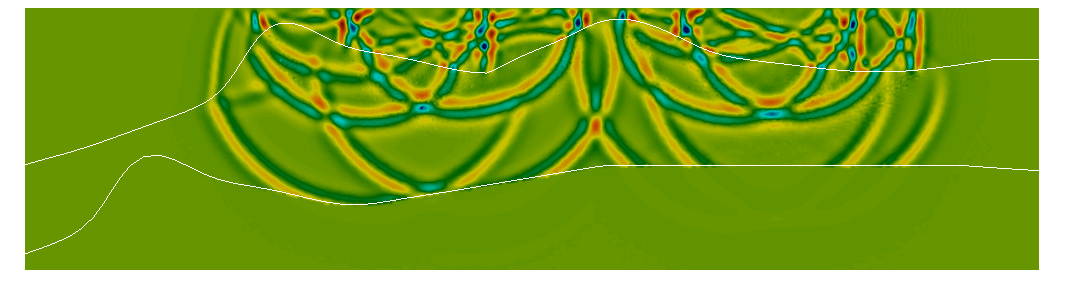}
\caption[Test case of Section \ref{layered_poro_ac}. Pressure solution in the poro-elastic-acoustic domain at four time instants]{Test case of Section \ref{layered_poro_ac}. Computed pressure $p_h$ in the poro-elastic-acoustic domain at four time instants (from up to down $t=1,\ 2,\ 3, \ 3.8 \rm s$), with $\Delta t = 10^{-2}\ \rm s$.}
\label{fig:emilia_test}
\end{figure}


\section{Conclusions}\label{sec:conclusions}

In this work we have presented a review of the development of PolyDG methods for  multiphysics wave propagation phenomena in elastic, poro-elastic and poro-elasto-acoustic media. 

After having recalled the theoretical background of the analysis of PolyDG methods we analysed the well-posedness and stability of different numerical formulations and proved $hp$-version a priori error estimates for the semi-discrete scheme.  
Time integration of the latter is obtained based on employing the leap-frog or the a Newmark methods. 
Numerical experiments have been designed not only to verify the theoretical error bounds but also to demonstrate the flexibility in the process of mesh design offered by polytopic elements. In this respect, numerical tests of physical interest have been also discussed. 

To conclude, PolyDG methods allow a robust and flexible numerical discretization that can be successfully applied to wave propagation problems.
Future developments in this direction include the study of multi-physics problems such as fluid-structure (with poro-elastic or thermo-elastic structure) interaction problems (we refer, e.g., to \cite{AntoniettiVeraniVergaraZonca_2019,Zonca_et_al_2021} for preliminary results) as well as the exploitation of  algorithms to design agglomeration-based multigrid methods and preconditioners for the efficient iterative solution of the (linear) system of equations stemming from PolyDG discretizations (see \cite{AntoniettiHoustonHuSartiVerani_2017,BottiColomboBassi_2017,AntoniettiPennesi_2019,AntonHoustonPennSuli_2020,BottiColomboCrivelliniFranciolini_2019} for seminal results). 


\section*{Declarations}
\textbf{Funding.} This work has received funding from the European Union's Horizon 2020 research and innovation programme under
the Marie Sk\l{}odowska-Curie grant agreement no. 896616 (project PDGeoFF: Polyhedral Discretisation Methods for
Geomechanical Simulation of Faults and Fractures in Poroelastic Media). The authors are members of the INdAM Research Group GNCS and this work is partially funded by INdAM-GNCS. Paola F. Antonietti has been partially funded by the research project PRIN n. 201744KLJL funded by MIUR.
\textbf{Conflict of interest.} The authors have no conflicts of interest to declare that are relevant to the content of this article. \textbf{Availability of data and material.} The datasets generated
during the current study are available from Ilario Mazzieri upon reasonable request. \textbf{Authors' contributions.} All authors contributed to the study conception and design.
The first draft of the manuscript was written by Ilario Mazzieri and Michele Botti. 
All authors commented on previous versions of the manuscript and approved the final one.
\textbf{Ethics approval.} Not applicable. 


\raggedright
{\small\bibliography{references}{}}

\end{document}